\numberwithin{equation}{section}
\theoremstyle{plain}
\newtheorem{theorem}[subsection]{Theorem}
\newtheorem{lemma}[subsection]{Lemma}
\theoremstyle{definition}
\newtheorem{remark}[subsection]{Remark}
\def\CC{\mathbb{C}}
\def\FF{\mathbb{F}}
\def\GG{\mathbb{G}}
\def\NN{\mathbb{N}}
\def\QQ{\mathbb{Q}}
\def\ZZ{\mathbb{Z}}
\def\calA{\mathcal{A}}
\def\calC{\mathcal{C}}
\def\calD{\mathcal{D}}
\def\calH{\mathcal{H}}
\def\calO{\mathcal{O}}
\def\calS{\mathcal{S}}
\def\calV{\mathcal{V}}
\def\bC{\mathbf{C}}
\def\bI{\mathbf{I}}
\def\bK{\mathbf{K}}
\def\bP{\mathbf{P}}
\def\bS{\mathbf{S}}
\newcommand\frt{\mathfrak{t}}
\newcommand\tilW{\widetilde{W}}
\newcommand\tilLam{\widetilde{\Lambda}}
\newcommand\tg{\widetilde{g}}
\newcommand{\Gr}{\textup{Gr}}
\newcommand{\gr}{\textup{gr}}
\newcommand\id{\textup{id}}
\newcommand{\IH}{\textup{IH}}
\newcommand\Out{\textup{Out}}
\newcommand\pt{\textup{pt}}
\newcommand\pr{\textup{pr}}
\newcommand\Rep{\textup{Rep}}
\newcommand\Sym{\textup{Sym}}
\newcommand{\tr}{\textup{tr}}
\newcommand{\Vect}{\textup{Vec}}
\newcommand\Aut{\textup{Aut}}
\newcommand{\Gm}{\GG_m}
\newcommand{\ad}{\textup{ad}}
\newcommand{\Ad}{\textup{Ad}}
\renewcommand\sc{\textup{sc}}
\newcommand\xch{\mathbb{X}^*}
\newcommand\xcoch{\mathbb{X}_*}
\newcommand{\isom}{\stackrel{\sim}{\to}}
\newcommand{\leftexp}[2]{{\vphantom{#2}}^{#1}{#2}}
\newcommand{\twtimes}[1]{\stackrel{#1}{\times}}
\newcommand{\cohog}[2]{\textup{H}^{#1}({#2})}     
\newcommand{\cohoc}[2]{\textup{H}_{c}^{#1}({#2})}     
\newcommand{\jiao}[1]{\langle{#1}\rangle}
\newcommand{\wt}[1]{\widetilde{#1}}
\newcommand{\Fl}{\textup{Fl}}
\newcommand{\dotw}{\dot{w}}
\newcommand{\doty}{\dot{y}}
\newcommand{\dota}{\dot{a}}
\newcommand{\upH}{\textup{H}}
\newcommand{\Hb}{\upH^{\bullet}}
\newcommand{\Hbs}{\upH^{\bullet,\sigma}}
\newcommand{\IHb}{\IH^{\bullet}}
\newcommand{\dG}{\check{G}}
\newcommand{\dT}{\check{T}}
\newcommand{\dB}{\check{B}}
\newcommand{\pol}{\textup{pol}}
\newcommand{\sph}{\textup{sph}}
\newcommand{\Tad}{T^{\ad}}
\newcommand{\Gad}{G^{\ad}}
\newcommand{\Grad}{\Gr^{\ad}}
\newcommand{\dGsc}{\dG^{\sc}}
\newcommand{\dTsc}{\dT^{\sc}}
\newcommand{\barA}{\overline{A}}
\newcommand{\bary}{\overline{y}}
\newcommand{\barw}{\overline{w}}
\newcommand{\barW}{\overline{W}}
\newcommand{\calSad}{\calS^{\ad}}
\newcommand{\ep}{\epsilon}
\newcommand\s{\sigma}
\renewcommand\t{\tau}
\renewcommand\l{\lambda}
\renewcommand\L{\Lambda}
\newcommand\tZ{\wt{Z}}
\newcommand{\quash}[1]{}
\title{A $(-q)$-analogue of weight multiplicities}
\author{George Lusztig}
\thanks{G.L. is partially supported by the NSF grant DMS-0758262.}
\address{Department of Mathematics, MIT, 77 Massachusetts Avenue, Cambridge, MA 02139}
\email{gyuri@math.mit.edu}
\author{Zhiwei Yun}
\thanks{Z.Y. is partially supported by the NSF grant DMS-0969470.}
\address{Department of Mathematics, MIT, 77 Massachusetts Avenue, Cambridge, MA 02139}
\email{zyun@math.mit.edu}
\date{}
\subjclass[2010]{Primary 20G05; Secondary 14D24}
\keywords{}
\begin{document}

\begin{abstract}
We prove a conjecture in \cite{L} stating that certain polynomials $P^{\sigma}_{y,w}(q)$ introduced in \cite{LV1} for twisted involutions in an affine Weyl group give $(-q)$-analogues of weight multiplicities of the Langlands dual group $\dG$. We also prove that the signature of a naturally defined hermitian form on each irreducible representation of $\dG$ can be expressed in terms of these polynomials $P^{\s}_{y,w}(q)$.
\end{abstract}

\maketitle


\section{Statement of the main theorems}

\subsection{The $P^{\sigma}$-polynomials}\label{ss:Ps} Let $W$ be a Coxeter group with simple reflections $S$. Let $\ell:W\to\NN$ be the length function defined by the simple reflections $S$. In \cite{KL}, for any two elements $y,w\in W$, a polynomial $P_{y,w}(q)\in\ZZ[q]$ is attached. Consider the Hecke algebra $\calH$ over $\calA=\ZZ[q,q^{-1}]$ ($q$ is an indeterminate) with basis $\{T_w\}_{w\in W}$ and multiplication given by $T_wT_{w'}=T_{ww'}$ if $\ell(ww')=\ell(w)+\ell(w')$ and $(T_s+1)(T_s-q)=0$ for all $s\in S$. Then $\{\sum_{y\in W;y\leq w}P_{y,w}(q)T_y\}_{w\in W}$ is (up to a factor) the ``new basis''  of $\calH$ introduced in \cite{KL}.

In \cite{LV1} (for $W$ a Weyl group) and \cite{L} (in general), the authors work in the situation of a triple $(W,S,*)$ where $(W,S)$ is as before and $*$ is an involution of $(W,S)$. Let $I_{*}=\{w\in W|w^{*}=w^{-1}\}$ be the $*$-twisted involutions in $W$. From the data $(W,S,*)$, a refined version $P^{\sigma}_{y,w}(q)\in\ZZ[q]$ of $P_{y,w}(q)$ is defined for $y,w\in I_{*}$. They also introduced a free $\calA$-module $M$ with basis $\{a_w\}_{w\in I_*}$, which carries a natural module structure over the Hecke algebra $\calH'$ with $q$ replaced by $q^2$. Then $\{\sum_{y\leq w,y\in I_{*}}P^{\s}_{y,w}(q)a_{y}\}_{w\in I_{*}}$ is (up to a factor) the ``new basis'' of $M$ introduced in \cite[Theorem 0.3]{LV1} and \cite[Theorem 0.4]{L}.

\subsection{Affine Weyl group}\label{ss:aff}
For the rest of the note we consider the setting of \cite[Section 6]{L}: $(W,S)$ is the Coxeter group associated to an untwisted connected affine Dynkin diagram. Let $\Lambda\subset W$ be the subgroup of {\em translations}, i.e., those elements which have finite conjugacy classes. This is a free abelian subgroup of $W$ of finite index. Let $\barW=W/\Lambda$. We shall use additive notation for the group law in $\Lambda$. The conjugation action of $w\in \barW$ on $\Lambda$ is denoted by $\lambda\mapsto\leftexp{w}{\lambda}$.

Fix a hyperspecial vertex $s_{0}\in S$ (i.e., a vertex in $S$ with Dynkin label equal to 1). Then the finite Weyl group $W_{J}$ generated by $J=S-\{s\}$ is a section of the natural projection $W\to \barW$, and we henceforth identify $W_{J}$ with $\barW$. Let $w_{J}$ be the longest element of $W_{J}$.

An element $\l\in\L$ is {\em dominant} if $\ell(\l w_{J})=\ell(\l)+\ell(w_{J})$. Let $\L^{+}$ denote the set of dominant translations. The set of double cosets $W_{J}\backslash W/W_{J}$ is in bijection with $\L^{+}$: each $W_{J}$-double coset in $W$ contains a unique $\l\in\L^{+}$. For $\lambda\in\L^{+}$, let $d_{\lambda}=\l w_{J}$ be the longest element in the double coset $W_{J}\lambda W_{J}$.

Let $*$ be the automorphism of $W$ defined by
\begin{eqnarray}\label{star}
w^{*}&:=&w_{J}ww_{J}, \textup{ for }w\in W_{J};\\
\notag\lambda^{*}&:=&-\leftexp{w_{J}}{\lambda} \textup{ for }\lambda\in \Lambda.
\end{eqnarray}
This $*$ is an involution which stabilizes $S$ and fixes $s_{0}$. In fact, if $w_{J}$ acts by $-1$ on $\Lambda$, then $*$ is the identity; otherwise $*$ has order two. As shown in \cite[Proposition 8.2]{L}, every element $d_{\lambda}$ belongs to $I_{*}$. Therefore we may consider the polynomials $P^{\sigma}_{d_{\mu},d_{\lambda}}(q)$.

The following theorem is the main result of this note, which was conjectured by the first author in \cite[Conjecture 6.4]{L}.
\begin{theorem}\label{th:main} Notation as above. Then for any $\lambda,\mu\in\L^{+}$, we have 
\begin{equation*}
P^{\sigma}_{d_{\mu},d_{\lambda}}(q)=P_{d_{\mu},d_{\lambda}}(-q).
\end{equation*}
\end{theorem}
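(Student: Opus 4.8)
The plan is to give a geometric interpretation of both sides of the claimed identity. On the left, the polynomials $P^{\sigma}_{y,w}(q)$ for $y,w\in I_{*}$ were shown in \cite{LV1, L} to be computable from the geometry of the group $\dG$ with an involution; more precisely, for the affine setup the relevant space is a symmetric-variety analogue of the affine Grassmannian, and $P^{\sigma}_{d_{\mu},d_{\lambda}}(q)$ should be read off from the stalks of an IC sheaf on an orbit closure inside $\dG(\CC((t)))/K$ for a suitable subgroup $K$ fixed by a Cartan-type involution. On the right, by the Kazhdan--Lusztig theory for the affine Hecke algebra together with the geometric Satake equivalence, $P_{d_{\mu},d_{\lambda}}(q)$ is the Poincar\'e polynomial of the stalk of the IC sheaf of $\overline{\Gr_{\lambda}}$ (the affine Grassmannian Schubert variety for $\dG$) at a point of $\Gr_{\mu}$, and hence $P_{d_{\mu},d_{\lambda}}(-q)$ encodes, via the geometric Satake theorem, the weight multiplicity $\dim V_{\lambda}(\mu)$ up to the alternating-sign bookkeeping coming from the cohomological grading. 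So the theorem is really the assertion that the ``twisted'' count and the ``sign-twisted ordinary'' count agree.

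First I would set up the precise dictionary: identify $\barW=W_{J}$ with the finite Weyl group of $\dG$, identify $\L$ with the coweight lattice $\xcoch(\dT)$, and match $\L^{+}$ with dominant coweights so that $d_{\lambda}=\lambda w_{J}$ corresponds to the open Schubert cell in $\overline{\Gr_{\lambda}}$. Then I would recall from \cite{L} the module $M$ with basis $\{a_w\}$ and its $\calH'$-action (Hecke algebra in the variable $q^{2}$), and restrict attention to the submodule or subquotient spanned by the $a_{d_{\lambda}}$, $\lambda\in\L^{+}$: the key structural input is that the $W_{J}$-double coset combinatorics makes this piece behave like the spherical (anti-invariant or invariant) part, so that the $*$-twisted KL basis element indexed by $d_{\lambda}$ is governed by the same recursion — multiplication by $T_{s}+1$ for $s\in J$ and by the special generator attached to $s_{0}$ — as the spherical KL basis element of the ordinary affine Hecke algebra indexed by $\lambda$, but with $q$ replaced by $-q$. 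Concretely, one shows both families satisfy the same defining properties (bar-invariance under the appropriate involution, triangularity $P^{\sigma}_{d_{\lambda},d_{\lambda}}=1$, and degree bound $\deg P^{\sigma}_{d_{\mu},d_{\lambda}}<(\ell(d_{\lambda})-\ell(d_{\mu}))/2$ matching $\deg P_{d_{\mu},d_{\lambda}}$), so that uniqueness of such a basis forces the substitution identity.

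The main obstacle I anticipate is controlling the ``twist'' carefully: the involution $*$ acts nontrivially on $\L$ via $\lambda\mapsto -\leftexp{w_{J}}{\lambda}$, and the Hecke-algebra action on $M$ mixes the generators $T_{s}$ for $s$ a simple reflection with the module structure in a way that differs from the left-multiplication action on $\calH$ itself; so matching the recursions is not a formal substitution $q\mapsto -q$ but requires checking that every structure constant that appears picks up exactly the expected sign. The cleanest route is probably to argue geometrically: realize $P^{\sigma}_{d_{\mu},d_{\lambda}}(q)$ as counting, with signs, $\FF_{q}$-points/weights in an IC stalk on the relevant symmetric space for $\dG$, show that this stalk is pure and that its Frobenius eigenvalues are $(-q)$ times those of the corresponding ordinary affine Grassmannian IC stalk — using that the symmetric space in question (for the split involution) is, on the level of the relevant strata, a ``form'' of $\Gr_{\dG}$ whose cohomology differs only by a Tate twist by $(-1)$ per cohomological degree — and then invoke parity vanishing (all stalk cohomology of affine Grassmannian IC sheaves is concentrated in even degrees) so that the Lefschetz-type count turns the $(-1)^{i}$ from the grading into the $(-1)$ in $P(-q)$. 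Once purity and parity vanishing are in place the sign bookkeeping is forced and the identity follows; establishing that parity/purity statement on the symmetric-space side, reducing it to the known geometric Satake facts for $\dG$, is where the real work lies.
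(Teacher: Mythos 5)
Your proposal correctly identifies the ambient framework---geometric Satake, parity vanishing, and some involution whose trace on IC stalks should produce the $(-q)$-substitution---but it is missing the one idea that actually makes the proof go, and in one respect it is set up on the wrong side of Langlands duality.

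On the setup: the paper's geometric model for $P^{\sigma}_{y,w}(q)$ is \emph{not} a symmetric-space Grassmannian $\dG(\CC((t)))/K$ for the dual group. It is the affine flag variety/Grassmannian of the original group $G$ (the one whose affine Weyl group is $W$), equipped with the anti-involution $\tau(g)=(g^{*})^{-1}$, and $P^{\sigma}_{d_{\mu},d_{\lambda}}(q)=\sum_{j}\tr\bigl(\calH^{2j}_{\mu}\Psi_{\lambda},\calH^{2j}_{\mu}\bC_{\lambda}\bigr)q^{j}$, where $\Psi_{\lambda}:\tau^{*}\bC_{\lambda}\isom\bC_{\lambda}$ is the canonical isomorphism of IC sheaves on $\Gr_{\leq\lambda}$ that restricts to the identity over the open stratum. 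The theorem is then equivalent to: $\calH^{2j}_{\mu}\Psi_{\lambda}$ acts on $\calH^{2j}_{\mu}\bC_{\lambda}$ by $(-1)^{j}$. Your sentence ``show that this stalk is pure and that its Frobenius eigenvalues are $(-q)$ times those of the corresponding ordinary affine Grassmannian IC stalk'' is precisely a reformulation of what has to be proved, not a proof, and the proposed mechanism (some Frobenius/Tate-twist comparison for a symmetric-space model) is not what the paper uses and is not substantiated.

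The genuine gap is the absence of the Tannakian argument that pins down the sign. The paper first shows $\tau^{*}$ is monoidal from $(\calS,\odot)$ to $(\calS,\odot^{\sigma})$ and that hypercohomology intertwines it with $\Hb$, hence via Tannakian reconstruction it corresponds to an automorphism of $\dG$ commuting with the principal $\Gm$ and preserving all irreducible objects up to isomorphism; being inner and centralizing $2\rho(\Gm)$, it is $\Ad(g)$ for a canonical $g\in\dT$. A concrete computation on the low-degree intersection cohomology of $\Omega_{\leq\lambda}$ (using the retraction of the big stratum to the flag variety $K/P_{\lambda}\cap K$, where the action $k\mapsto k^{*}w_{J}$ visibly induces $-1$ on $\cohog{2}{\cdot}$) then forces $g=(-1)^{\rho}$. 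This is the single input from which everything else follows: once you know $\tau^{*}_{K}$ acts on $\IH^{2j}(\Omega_{\leq\lambda})$ by $(-1)^{j}$, the filtration of $\IHb(\Omega_{\leq\lambda})$ whose graded pieces are $\Hb_{c}(\Omega_{\mu})\otimes\calH^{\bullet}_{\mu}\bC_{\lambda}$, together with a bootstrap from the case $\mu=\lambda$, transfers the sign from global cohomology to the stalks. Your proposal never isolates this element $(-1)^{\rho}$, so the needed $(-1)^{j}$ remains an unverified assertion. The alternative algebraic route you sketch (matching the $*$-twisted KL recursion with the ordinary one under $q\mapsto -q$) is not pursued in the paper for Theorem~\ref{th:main} and, as you anticipate, the $\calH'$-module structure on $M$ does not match the left-multiplication structure on $\calH$ by a formal substitution; controlling it would require essentially rediscovering the same sign input.
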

The proof of the theorem will be given in Section \ref{s:proof}, after some preparation regarding the geometric Satake equivalence in Section \ref{s:geom}. In Section \ref{s:gen}, we give a generalization of the above theorem to other involutions $\diamond$ of $(W,S)$ which are closely related to $*$.

In \cite[Proposition 8.6]{L}, the first author proves special cases of this result by pure algebra.

It is proved in \cite[6.1]{L83} that $P_{d_{\mu},d_{\lambda}}(q)$ is a $q$-analogue of the $\mu$-weight multiplicity in the irreducible representation $V_{\lambda}$ of an algebraic group $\dG$ (see the discussion in Section \ref{ss:Gr}). Therefore, we may interpret the above theorem as saying that $P^{\sigma}_{d_{\mu},d_{\lambda}}(q)$ is a $(-q)$-analogue of weight multiplicities, hence the title of this note.

\subsection{The $Z^{\s}$-polynomials}\label{ss:Zs} The polynomials $P_{y,w}(q)$ is the Poincar\'e polynomial of the local intersection cohomology of an affine Schubert variety indexed by $w$; the Poincar\'e polynomial of the global intersection cohomology of the same affine Schubert variety is given by
\begin{equation*}
Z_w(q)=\sum_{y\in W;y\leq w}P_{y,w}(q)q^{\ell(y)}\in\ZZ[q].
\end{equation*}
Algebraically, consider the $\calA$-algebra homomorphism $\chi:\calH\to\calA$ given by $\chi(T_w)=q^{\ell(w)}$ for all $w\in W$. Then $Z_{w}(q)$ is the value of the new basis $\sum_{y\leq w}P_{y,w}(q)T_{y}$ under the homomorphism $\chi$.

We want to define some polynomials $Z_w^\s(q)\in\QQ(q)$ which play the same role with respect to $Z_w(q)$ as $P_{y,w}^\s(q)$ plays with respect to $P_{y,w}(q)$. To to do, we replace $\chi:\calH\to\calA$ by the following $\calA$-linear map introduced in \cite[5.7]{L}
\begin{eqnarray}\label{zeta}
\zeta:M&\to&\QQ(q)\\
a_w&\mapsto&q^{\ell(w)}\left(\frac{q-1}{q+1}\right)^{\phi(w)} \textup{ for all } w\in I_*
\end{eqnarray}
Here $\phi:I_*\to\NN$ is defined in \cite[4.5]{L}. Concretely, for $w\in I_{*}$ with image $\barw\in\barW$, $\phi(w)=e(\barw*)-e(*)$, where $e(*)$ (resp. $e(\barw*)$) is the dimension of the $(-1)$-eigenspace of the involution $t\mapsto t^{*}$ (resp. $t\mapsto w(t^{*})$) on $\L_{\QQ}=\L\otimes_{\ZZ}\QQ$. 

For $w\in I_*$ we let $Z^{\s}_{w}(q)$ be the image of the new basis of $M$ under $\zeta$:
\begin{equation}\label{defineZ}
Z^\s_w(q)=\zeta\left(\sum_{y\in I_*;y\leq w}P_{y,w}^\s(q) a_y\right)=\sum_{y\in I_*;y\leq w}P_{y,w}^\s(q) q^{\ell(y)}\left(\frac{q-1}{q+1}\right)^{\phi(y)}\in\QQ(q)
\end{equation}
We also set
\begin{equation}\label{definetZ}
\tZ_{d_\l}(q)=Z_{d_\l}(q)Z_{w_{J}}(q)^{-1}\in\QQ(q), \hspace{.5cm} \tZ^\s_{d_\l}(q)=Z^\s_{d_\l}(q)Z^\s_{w_{J}}(q)^{-1}\in\QQ(q).
\end{equation}
Our second main result is
\begin{theorem}\label{th:Z-q} For any $\l\in\L^+$ we have $\tZ^\s_{d_\l}(q)=\tZ_{d_\l}(-q)$. In particular, $\tZ^\s_{d_\l}(q)\in\ZZ[q]$.
\end{theorem}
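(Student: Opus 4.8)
The plan is to deduce Theorem~\ref{th:Z-q} from Theorem~\ref{th:main} by means of a ``parabolic decomposition'' of $Z_{d_\l}$ and $Z^\s_{d_\l}$ relative to the finite subgroup $W_J=\barW$. The outcome will be that $\tZ_{d_\l}(q)$ and $\tZ^\s_{d_\l}(q)$ are both expressed as sums over dominant $\mu\le\l$ of the local polynomials $P_{d_\mu,d_\l}$, resp.\ $P^\s_{d_\mu,d_\l}$, weighted by combinatorial factors that are carried into one another by $q\mapsto-q$; Theorem~\ref{th:main} then does the rest.

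On the untwisted side this is essentially classical geometry. Let $\Gr=\Gr_G$ be the affine Grassmannian, $\Fl$ the affine flag variety, and $\pi\colon\Fl\to\Gr$ the (Zariski-locally trivial) projection whose fibres are copies of $G/B$. Because $d_\l=\l w_J$ is the longest element of its $W_J$-double coset, one has $\overline{\Fl_{d_\l}}=\pi^{-1}(\overline{\Gr_\l})$, hence $IC_{\overline{\Fl_{d_\l}}}=\pi^{*}IC_{\overline{\Gr_\l}}$ up to shift, so that the stalks of $IC_{\overline{\Fl_{d_\l}}}$ are constant along the fibres of $\pi$: explicitly $P_{y,d_\l}(q)=P_{d_\mu,d_\l}(q)$ for every $y$ in the double coset $W_J d_\mu W_J$ ($\mu$ the dominant translation it contains), and $\{y:y\le d_\l\}=\bigsqcup_{\mu\le\l}W_J d_\mu W_J$. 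Since $\IH^\bullet(\overline{\Fl_{d_\l}})\cong\IH^\bullet(\overline{\Gr_\l})\otimes\cohog{\bullet}{G/B}$ and $\cohog{\bullet}{G/B}$ has Poincar\'e polynomial $Z_{w_J}(q)=\sum_{w\in W_J}q^{\ell(w)}$, we obtain
\begin{equation*}
\tZ_{d_\l}(q)=\frac{Z_{d_\l}(q)}{Z_{w_J}(q)}=\sum_{\mu\le\l}P_{d_\mu,d_\l}(q)\,c_\mu(q),\qquad c_\mu(q):=\frac{1}{Z_{w_J}(q)}\sum_{y\in W_J d_\mu W_J}q^{\ell(y)}=\bigl|\Gr_\mu(\FF_q)\bigr|\in\ZZ[q],
\end{equation*}
where $c_\mu(q)=q^{\ell(\mu)-\ell(w_J)+\ell(w_{J(\mu)})}Z_{w_J}(q)/Z_{w_{J(\mu)}}(q)$ with $W_{J(\mu)}=\textup{Stab}_{W_J}(\mu)$; in particular $c_0=1$, which re-proves $\tZ_{d_\l}\in\ZZ[q]$.

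The main work is to establish the twisted counterpart
\begin{equation*}
\tZ^\s_{d_\l}(q)=\sum_{\mu\le\l}P^\s_{d_\mu,d_\l}(q)\,c^\s_\mu(q),\qquad c^\s_\mu(q):=\frac{1}{Z^\s_{w_J}(q)}\sum_{y\in I_*\cap W_J d_\mu W_J}q^{\ell(y)}\Bigl(\tfrac{q-1}{q+1}\Bigr)^{\phi(y)},
\end{equation*}
which rests on two ingredients: (i) the $\s$-refinement of the stalk computation, $P^\s_{y,d_\l}(q)=P^\s_{d_\mu,d_\l}(q)$ for $y\in I_*\cap W_J d_\mu W_J$ (together with $P^\s_{y,w_J}(q)=1$, so that $Z^\s_{w_J}(q)$ is exactly the numerator of $c^\s_0$), which should follow from the $\s$-equivariant version of the flag bundle $\pi$ provided by the setup of Section~\ref{s:geom}; and (ii) the identity $c^\s_\mu(q)=c_\mu(-q)$. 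Granting these, Theorem~\ref{th:main} gives $P^\s_{d_\mu,d_\l}(q)=P_{d_\mu,d_\l}(-q)$, whence
\begin{equation*}
\tZ^\s_{d_\l}(q)=\sum_{\mu\le\l}P_{d_\mu,d_\l}(-q)\,c_\mu(-q)=\tZ_{d_\l}(-q),
\end{equation*}
and $\tZ^\s_{d_\l}\in\ZZ[q]$ because $\tZ_{d_\l}\in\ZZ[q]$.

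I expect the identity $c^\s_\mu(q)=c_\mu(-q)$ to be the main obstacle: it amounts to understanding the joint distribution of the length $\ell$ and the defect $\phi$ over the set $I_*\cap W_J d_\mu W_J$ of $*$-twisted involutions in the double coset of $d_\mu$, and matching it, after normalizing by the $\mu=0$ case, with the point count $\bigl|\Gr_\mu(\FF_q)\bigr|$ evaluated at $-q$. The natural route is to pass to the ``real form'' determined by $*$: one wants to realize $q^{\ell(y)}(\tfrac{q-1}{q+1})^{\phi(y)}$ as the point count of the piece of the $*$-twisted real form of $\Fl$ attached to $y$, and then to factor the sum over the double coset through the affine-bundle structure $\Gr_\mu\to G/P_\mu$ exactly as on the untwisted side; the substitution $q\mapsto-q$ enters because in the real form each split $\Gm$ (contributing $q-1$ points) is replaced by a norm-one torus (contributing $q+1$ points), and tracking $\phi$ additively across the bundle shows the net effect on $c_\mu$ is precisely $q\mapsto-q$. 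Carrying out this bookkeeping carefully — or, alternatively, proving $c^\s_\mu(q)=c_\mu(-q)$ by a direct combinatorial analysis inside the finite Weyl group $W_J$ — is the technical heart; the decomposition formulas and the invariance (i) are then formal consequences of the geometry assembled in Section~\ref{s:geom} together with Theorem~\ref{th:main}.
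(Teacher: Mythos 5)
Your strategy coincides with the paper's algebraic proof (Section \ref{s:a2}): after the constancy of $P^\s_{y,d_\l}(q)$ for $y\in I_*\cap W_{J}\mu W_{J}$ (in the paper this is the quoted fact \cite[3.6(f)]{L}; your appeal to a $\tau$-equivariant flag bundle is plausible but unproven, and note that such $y$ are not of the form $d_\nu$, so \eqref{Grdef} does not apply verbatim and one would have to argue via \eqref{defineP} and the isomorphism $\pi^{*}\bC_{\l}\cong\bS_{d_{\l}}$), and after applying Theorem \ref{th:main}, the entire theorem reduces to your identity (ii), $c^\s_\mu(q)=c_\mu(-q)$, which is literally the paper's \eqref{5.3a}. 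This is exactly where your proposal stops: you label (ii) the ``main obstacle'' and offer only heuristics (a real-form point count, or unspecified combinatorics in $W_{J}$). But (ii) is all the mathematical content that remains once Theorem \ref{th:main} is granted, and it is not routine bookkeeping: in the paper it takes up most of Section \ref{s:a2}. With $b$ the minimal element of the double coset and $H=J\cap bJb^{-1}$, one uses \cite[5.9, 5.10]{L} to reduce \eqref{5.3a} to \eqref{5.3b}; one then shows that the induced involution $\ep$ of $W_{H^*}$ is conjugation by $w_{H^*}$ (Lemma \ref{5.4a}), evaluates $\bP_{H^*,\Ad(w_{H^*})}(q)$ in terms of the exponents of $W_{H^*}$ using Steinberg's point count for twisted groups (Lemma \ref{5.5a}), and finally reduces everything to the numerical identity $\phi(b)=n_J-n_{H^*}$ of \eqref{5.6a}, whose proof needs the additivity $\phi(bz)=\phi'(z)+\phi(b)$ (Lemma \ref{l:5.6b}) together with the fact that the number of odd exponents of a finite Weyl group equals the dimension of the $(-1)$-eigenspace of its longest element. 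Your picture that ``each split $\Gm$ becomes a norm-one torus, contributing $q+1$'' is a reasonable hint at why $q\mapsto -q$ appears, but as stated it does not control the joint distribution of $(\ell,\phi)$ over $I_*\cap W_{J}\mu W_{J}$, which is precisely what the $\phi$-analysis above accomplishes; so the proposal has a genuine gap at its central step.

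It is also worth noting that the paper's geometric proof of the same theorem (Section \ref{s:g2}) bypasses your identity (ii) entirely: Lemma \ref{l:tZ} interprets $\tZ^\s_{d_\l}(q)$ as the trace of the involution $\tau^{*}_{K}$ on $\IHb(\Omega_{\leq\l})$, and Lemma \ref{l:g}(3) shows this involution acts on $\IH^{2j}$ by $(-1)^{j}$, which gives $\tZ^\s_{d_\l}(q)=\tZ_{d_\l}(-q)$ at once, since $\tZ_{d_\l}(q)$ is the intersection cohomology Poincar\'e polynomial of $\Omega_{\leq\l}$. If you wish to keep your double-coset route, the piece you must actually supply is a proof of $c^\s_\mu(q)=c_\mu(-q)$, for instance along the exponent/defect computation just described.
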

We will present two proofs of the theorem, one geometric in Section \ref{s:g2} which is based on a cohomological interpretation of $Z^{\s}_{w}(q)$, and one algebraic in Section \ref{s:a2}. Both proofs rely on Theorem \ref{th:main}.

It is also observed in \cite{L83} that $\tZ_{d_{\l}}(q)$ is a $q$-analogue of the dimension of the irreducible representation $V_{\l}$ of the group $\dG$. We will show in Section \ref{ss:sign} that $\tZ^{\s}_{d_{\l}}(q)$ is a $q$-analogue of the signature of $V_{\l}$ under a naturally defined hermitian form introduced in \cite{L97}.

\subsection{Gelfand's trick} It is interesting to notice the relation between the involution $*$ and ``Gelfand's trick'' in proving that the spherical Hecke algebra is commutative. In fact, for a split simply-connected almost simple group $G$ over a local field $F$ with Weyl group $W_{J}$, the double coset $G(\calO_{F})\backslash G(F)/G(\calO_{F})$ is in bijection with $W_{J}\backslash W/W_{J}$. The spherical Hecke algebra $\calH^{\sph}$ consists of compactly supported bi-$G(\calO_F)$-invariant functions on $G(F)$ with the algebra structure given by convolution. There is an involution $g\mapsto g^{*}$ of $G$ which stabilizes a split maximal torus $T$ and acts by $-w_{J}$ on $\xcoch(T)=\Lambda$. The induced action on the affine Weyl group $W$ is the same as the one given in Section \ref{ss:aff}. The anti-involution $\tau:g\mapsto(g^{*})^{-1}$ induces an anti-involution on $\calH^{\sph}$ while fixing each double coset $W_{J}\backslash W/W_{J}$, hence acting by identity on $\calH^{\sph}$. This implies the commutativity of $\calH^{\sph}$. Roughly speaking, the main theorem is a categorification of Gelfand's trick: it explains what $\tau$ does to the Satake category (categorification of $\calH^{\sph}$) beyond the level of isomorphism classes of objects (on which it acts by identity).


\subsection{Notation and conventions} By a tensor category, we mean a monoidal category with a commutativity constraint compatible with the associativity constraint.

For an algebraic torus $T$, let $\xcoch(T)$ (resp. $\xch(T)$) denote the group of cocharacters (resp. characters) of $T$. For a cocharacter $\lambda:\Gm\to T$, we use $x^{\lambda}$ to mean the image of $x\in\Gm$ under $\lambda$; for a character $\alpha:T\to\Gm$, we use $z^{\alpha}$ to denote the image of $z\in T$ under $\alpha$. Note that $(x^{\lambda})^{\alpha}=x^{\jiao{\alpha,\lambda}}\in\Gm$.

By an involution in a group, we mean an element of order at most two.

All algebraic varieties in this note are over $\CC$; all complexes of sheaves are with $\QQ$-coefficients.

For an algebraic variety $X$ of dimension $n$, let $\IHb(X)$ denote its intersection cohomology groups with $\QQ$-coefficients. We normalize it so that $\IH^{i}(X)=0$ unless $0\leq i\leq 2n$.

\section{Geometric definition of the $P^{\sigma}$-polynomials}\label{s:geom}
\subsection{Affine flag variety}\label{ss:flag}
In this section we give a geometric definition of the polynomials $P^{\sigma}_{x,y}(q)$. In fact, in the case of  finite Weyl groups with $*=\id$, such a geometric definition is given in \cite[Section 3]{LV1} using the geometry of flag varieties. It is remarked in \cite[Section 7.1-7.2]{LV1} that such a geometric definition works for affine Weyl groups and general $*$, with the flag varieties replaced by affine flag varieties. This section is an elaboration of this remark.
 
Let $G$ be the simply-connected almost simple group over $\CC$ whose extended Dynkin diagram is the one we started with in Section \ref{ss:aff}, so that the usual Dynkin diagram of $G$ is given by removing the vertex $s_{0}$. Fix a pinning for $G$; in particular, fix a maximal torus $T\subset G$, and a Borel $B$ containing $T$. We may identify $(W_{J},S-\{s_{0}\})$ with the Weyl group $N_{G}(T)/T$ together with the simple reflections determined by $B$. We may also identify $\Lambda$ with the cocharacter lattice $\xcoch(T)$, which is also the coroot lattice of $G$.

Let $G((t))$ be the loop group associated to $G$: it is the ind-scheme representing the functor $R\mapsto G(R((t)))$ for any $\CC$-algebra $R$. Let $G[[t]]\subset G((t))$ be the subscheme representing the functor $R\mapsto G(R[[t]])$. The affine Weyl group $W$ may be identified with the $\CC$-points of $N_{G((t))}(T((t)))/T[[t]]$. For each $w\in W$, we choose a lifting $\dotw$ of it in $N_{G((t))}(T((t)))$. For example, if $\lambda\in\Lambda$, we may choose $\dot{\lambda}$ to be the point $t^{\lambda}\in T((t))$.

An Iwahori subgroup of $G((t))$ is one which is conjugate to $\bI=\pi^{-1}(B)\subset G[[t]]$ where $\pi:G[[t]]\to G$ is the mod $t$ reduction morphism. Let $\Fl=G((t))/\bI$ be the affine flag variety of $G$ classifying Iwahori subgroups of the loop group $G((t))$. This is a (locally finite) infinite union of projective varieties over $\CC$ of increasing dimensions. The group scheme $\bI$ acts on $\Fl$ from the left with orbits $\Fl_{w}=\bI\dotw \bI/\bI$ indexed by $w\in W$. Each orbit $\Fl_{w}$ is isomorphic to an affine space of dimension $\ell(w)$ (with respect to the simple reflections $S$). Let $\Fl_{\leq w}$ be the closure of $\Fl_{w}$, which is the union of $\Fl_{y}$ for $y\leq w$.

Consider the derived category $D_{\bI}(\Fl)=\varinjlim_{w\in W} D_{\bI}(\Fl_{\leq w})$ of $I$-equivariant $\QQ$-complexes which are supported on the $\Fl_{\leq w}$ for some $w\in W$.  Note that for fixed $w$, the $I$-action on $\Fl_{\leq w}$ factors through a quotient group scheme $\bI_{w}$ of finite type such that $\ker(\bI\to\bI_{w})$ is pro-unipotent. We therefore understand $D_{\bI}(\Fl_{\leq w})$ as the category of $\bI_{w}$-equivariant derived category of $\QQ$-complexes on the projective variety $\Fl_{\leq w}$ in the sense of \cite{BL}.

\subsection{Geometric interpretation of the $P^{\s}$-polynomials} Let $*$ denote the pinned automorphism of $G$ such that $\lambda\mapsto (\leftexp{w_{J}}{\lambda})^{*}$ acts by $-1$ on $\Lambda$. This involution induces an involution on the affine Weyl group $(W,S)$ which coincides with the $*$ defined in \eqref{star}. The involution $*$ also induces an involution on $G((t))$ preserving the Iwahori $\bI$, so that it induces an involution on $\Fl$ which we still denote by $*$. 

Consider the anti-involution $\tau$ of $G((t))$ defined as
\begin{equation*}
\tau(g)=(g^{*})^{-1}.
\end{equation*}
We would like to define a functor:
\begin{equation*}
\tau^{*}:D_{\bI}(\Fl)\to D_{\bI}(\Fl)
\end{equation*}
given by pull-back along the map $\tau$. We may identify each object of $D_{\bI}(\Fl)$ as a complex on $G((t))$ equivariant under the left and right translation by $\bI$. Since each $\bI$-double coset $\bI\dotw\bI\subset G((t))$ is sent to another double coset $\bI(\dotw^{*})^{-1}\bI$, pull-back by $\tau$ preserves bi-$\bI$-equivariance, and defines the functor $\tau^{*}$.

For each object $\bK\in D_{\bI}(\Fl)$ and $y\in W$, the restriction of $\bK$ to $\Fl_{y}$ is a constant complex by $\bI$-equivariance. We therefore have a vector space $\calH^{i}_{y}\bK$, which is canonically isomorphic to the $i$-th cohomology of the stalk of $\bS_{w}$ at any point of $\Fl_{y}$. 

For each $w\in W$, one has the (shifted) intersection cohomology complex $\bS_{w}\in D_{\bI}(\Fl)$ of $\Fl_{\leq w}$, which we normalize so that $\bS_{w}|_{\Fl_{w}}\cong\QQ$. If $w\in I_{*}$ (i.e., $(w^{*})^{-1}=w$), we have a canonical isomorphism
\begin{equation}\label{invSw}
\Phi_{w}:\tau^{*}\bS_{w}\isom\bS_{w}
\end{equation}
whose restriction to $\Fl_{w}$ is the identity map for the constant sheaf $\QQ$. For each $y\in I_{*}, y\leq w$, the restriction of $\Phi_{w}$ induces an involution:
\begin{equation*}
\calH^{i}_{y}\Phi_{w}:\calH^{i}_{y}\bS_{w}=\tau^{*}\calH^{i}_{y}(\tau^{*}\bS_{w})\to\calH^{i}_{y}\bS_{w}
\end{equation*}
where the first equality comes from the definition of $\tau^{*}$. Then
\begin{equation}\label{defineP}
P^{\sigma}_{y,w}(q)=\sum_{i\in\ZZ}\tr(\calH^{i}_{y}\Phi_{w},\calH^{i}_{y}\bS_{w})q^{i/2}.
\end{equation}
It is known that $\calH^{i}_{y}\bS_{w}=0$ for odd $i$(see \cite[Theorem 4.2]{KL2} for the case $W$ finite, \cite[Theorem 5.5]{KL2} for the case $W$ affine; see also \cite[A.7]{Ga} for the affine case), therefore $P^{\sigma}_{y,w}\in\ZZ[q]$.

\subsection{Affine Grassmannian and the geometric Satake equivalence}\label{ss:Gr}
Let $\Gr=G((t))/G[[t]]$ be the affine Grassmannian of $G$, which is also a locally finite union of projective varieties of increasing dimensions. The left translation by $G[[t]]$ on $\Gr$ has orbits indexed by $W_{J}$-orbits on $\Lambda$. For each dominant coweight $\lambda\in\Lambda^{+}$, there is a unique $G[[t]]$-orbit $\Gr_{\lambda}$ containing $t^{\lambda}$ (which also contains $t^{\lambda'}$ for any $\lambda'$ in the same $W_{J}$-orbit of $\lambda$). The dimension of $\Gr_{\lambda}$ is $\jiao{2\rho,\lambda}$, where $2\rho$ is the sum of positive roots of $G$.
  
Let $\calS=P_{G[[t]]}(\Gr)$ be the category of $G[[t]]$-equivariant perverse sheaves on $\Gr$ which are supported on finitely many $G[[t]]$-orbits. This abelian category carries a convolution product $\odot:\calS\times\calS\to\calS$ (implicit in \cite{L83}, see \cite[Proposition 2.2.1]{G}), which is equipped with an obvious associativity constraint and a less obvious commutativity constraint (based on ideas of Drinfeld, see an exposition in \cite[Section 5]{MV}) making $(\calS,\odot)$ a tensor category (the convolution product is usually denoted by $*$ in literature, and we change it to $\odot$ to avoid confusion with the involution $*$). Let $\Vect^{\gr}$ be the category of finite dimensional graded $\QQ$-vector spaces (the commutativity constraint is {\em not} adjusted by the Koszul sign convention, so $\Vect^{\gr}\cong\Rep(\Gm)$ as tensor categories). Consider the functor
\begin{eqnarray*}
\Hb&:&\calS\to\Vect^{\gr}\\
&&\bK\mapsto\bigoplus_{i\in\ZZ}\cohog{i}{\Gr,\bK}.
\end{eqnarray*}
This functor carries a tensor structure (see \cite[Proposition 3.4.1]{G} and \cite[Proposition 6.3]{MV}, note that the commutativity constraint of $\calS$ is adjusted by a sign in \cite[Paragraph after Remark 6.2]{MV} in order to make $\Hb$ a tensor functor). 

Composing $\Hb$ with the forgetful functor $\Vect^{\gr}\to\Vect$ (the category of finite dimensional vector spaces), we get a fiber functor $\upH$ of the tensor category $\calS$, hence an algebraic group $\dG=\Aut^{\otimes}(\upH)$ over $\QQ$. In \cite[Theorem 3.8.1]{G} (with the corrected commutativity constraint by Drinfeld and based on results of \cite{L83}), it is proved that $\dG$ is a connected split reductive group over $\QQ$ whose root datum is dual to $G$. The proof in \cite[Theorem 7.3]{MV} in fact equips $\dG$ with a maximal torus $\dT$ with a canonical identification $\xch(\dT)=\Lambda=\xcoch(T)$. In fact, the functor $\Hb$ factors as
\begin{equation*}
\Hb:\calS\xrightarrow{\oplus_{\lambda\in\Lambda}F_{\lambda}}\Vect^{\Lambda}\xrightarrow{\jiao{2\rho,-}}\Vect^{\gr}
\end{equation*}
Here the first arrow is the sum of {\em weight functors} introduced in \cite[Theorem 3.6]{MV}; the second functor turns a $\Lambda$-graded vector space $\oplus_{\lambda}V^{\lambda}$ into a $\ZZ$-graded one $V^{i}:=\oplus_{\jiao{2\rho,\lambda}=i}V^{\lambda}$.
Under the identification $\calS\isom\Rep(\dG)$, the functor $\Hb$ then factors as
\begin{equation*}
\Rep(\dG)\to\Rep(\dT)\to\Rep(\Gm)
\end{equation*}
induced by the homomorphisms $2\rho:\Gm\to\dT\hookrightarrow\dG$.

\subsection{Geometric interpretation of $P^{\s}_{d_{\mu},d_{\l}}(q)$}
For each $\lambda\in\Lambda^{+}$, let $\bC_{\lambda}$ be the shifted intersection cohomology complex of the closure $\Gr_{\leq\lambda}$ of $\Gr_{\lambda}$, such that $\bC_{\l}|_{\Gr_{\l}}=\QQ$. The involution $\tau$ of $G((t))$ again induces a functor
\begin{equation}\label{tauS}
\tau^{*}:\calS\to\calS.
\end{equation}
One can similarly define the stalks $\calH^{i}_{\mu}\bC_{\lambda}$ for $\mu\leq\lambda\in\Lambda^{+}$, which again vanishes for odd $i$. Each double coset $G[[t]]t^{\lambda}G[[t]]$ is sent to $G[[t]]t^{-\lambda^{*}}G[[t]]$. By the definition of $*$, we have $-\lambda^{*}=\leftexp{w_{J}}{\lambda})$, hence $G[[t]]t^{-\lambda^{*}}G[[t]]=G[[t]]t^{\lambda}G[[t]]$, i.e., each $G[[t]]$-double coset in $G((t))$ is stable under $\tau$ (this is equivalent to saying that the longest element in each $W_{J}$-double coset belongs to the set $I_{*}$ of $*$-twisted involutions). This means one can fix an isomorphism
\begin{equation}\label{invC}
\Psi_{\lambda}:\tau^{*}\bC_{\lambda}\isom\bC_{\lambda}
\end{equation} 
which is the identity when restricted to $\Gr_{\lambda}$. This isomorphism similarly induces an involution:
\begin{equation*}
\calH^{i}_{\mu}\Psi_{\lambda}:\calH^{i}_{\mu}\bC_{\lambda}=\calH^{i}_{\mu}(\tau^{*}\bC_{\lambda})\to\calH^{i}_{\mu}\bC_{\lambda}.
\end{equation*} 

We have a projection map $\pi:\Fl\to\Gr$. For each $\lambda\in\Lambda^{+}$, the preimage $\pi^{-1}(\Gr_{\leq\lambda})=\Fl_{\leq d_{\lambda}}$ (recall $d_{\lambda}\in W_{J}\lambda W_{J}$ is the longest element). Since $\Fl_{\leq d_{\lambda}}\to\Gr_{\leq\lambda}$ is smooth, we have an isomorphism $\phi_{\lambda}:\pi^{*}\bC_{\lambda}\cong\bS_{d_{\lambda}}$, which can be made canonical by requiring its restriction to $\Fl_{d_{\lambda}}$ to be the identity map on the constant sheaf. Moreover, the isomorphism $\phi_{\lambda}$ clearly intertwines $\Psi_{\lambda}$ and $\Phi_{d_{\lambda}}$. Using $\phi_{\lambda}$, we get a commutative diagram
\begin{equation*}
\xymatrix{\calH^{j}_{\mu}\bC_{\lambda}\ar[rr]^{\calH^{j}_{\mu}\phi_{\lambda}}\ar[d]_{\calH^{j}_{\mu}\Psi_{\lambda}} && \calH^{j}_{d_{\mu}}\bS_{d_{\lambda}}\ar[d]^{\calH^{j}_{\mu}\Phi_{\lambda}}\\
\calH^{j}_{\mu}\bC_{\lambda}\ar[rr]^{\calH^{j}_{\mu}\phi_{\lambda}} && \calH^{j}_{d_{\mu}}\bS_{d_{\lambda}}}
\end{equation*}
Therefore, from \eqref{defineP} we get
\begin{equation}\label{Grdef}
P^{\sigma}_{d_{\mu},d_{\lambda}}(q)=\sum_{j\in\ZZ}\tr(\calH^{2j}_{\mu}\Psi_{\lambda},\calH^{2j}_{\mu}\bC_{\lambda})q^{j}.
\end{equation}

\subsection{Loop group of a compact form} At certain points in the proof of the main theorem, it is convenient to take an alternative point of view of the affine Grassmannian $\Gr$, namely the space of polynomial loops on the compact form of $G$. We remark that the switch of viewpoint is not necessary for the proof, but it makes the idea of the proof more transparent.
 
Let $K\subset G(\CC)$ be a compact real form which is stable under $*$ (for example, we may define $K$ using the Cartan involution $\dotw_{J}*$, for any lifting of $\dotw_{J}$ of $w_{J}$ to $N_{G}(T)$). Let $\Omega=\Omega_{\pol}K$ be the space of polynomial loops on $K$ based at the identity element $1\in K$ (see \cite[\S3.5]{PS}). By \cite[Theorem 8.6.3]{PS}, there is a homeomorphism
\begin{equation*}
\iota:\Omega\stackrel{\wt\iota}{\hookrightarrow} G(\CC((t)))\xrightarrow{p}\Gr(\CC).
\end{equation*} 
The stratification of $\Gr$ by $\{\Gr_{\lambda}\}_{\lambda\in\Lambda^{+}}$ gives a Whitney stratification of $\Omega$.  We denote the strata by $\Omega_{\lambda}$ with closure $\Omega_{\leq\lambda}$. Let $D^{b}(\Omega)=\varinjlim_{\lambda}D^{b}(\Omega_{\leq\lambda})$. Let $\calS_{K}$ be the full subcategory of $D^{b}(\Omega)$ consisting of perverse sheaves which are locally constant along each strata $\Omega_{\lambda}$. 

Let $m_{K}:\Omega\times\Omega\to\Omega$ be the multiplication map. This is stratified in the sense that $m_{K}(\Omega_{\leq\lambda}\times\Omega_{\leq\mu})=\Omega_{\leq\lambda+\mu}$ for $\lambda,\mu\in\Lambda^{+}$. Define\begin{eqnarray*}
\odot_{K}&:&D^{b}(\Omega)\times D^{b}(\Omega)\to D^{b}(\Omega)\\
&&(\bK_{1},\bK_{2})\mapsto m_{K!}(\bK_{1}\boxtimes\bK_{2}).
\end{eqnarray*}
Let 
\begin{equation*}
\Hb:\calS_{K}\to\Vect^{\gr}
\end{equation*}
be the functor of taking total cohomology.

The involution $\tau:k\mapsto (k^{*})^{-1}$ on $K$ induces an involution $\tau_{K}$ on $\Omega$, which gives the pullback functor
\begin{equation*}
\tau^{*}_{K}:D^{b}(\Omega)\to D^{b}(\Omega).
\end{equation*}

\begin{lemma}\label{l:GtoK}
\begin{enumerate}
\item []
\item The functor $\odot_{K}$ has image in $\calS_{K}$, and there is a natural associativity constraint making $(\calS_{K},\odot_{K})$ a monoidal category; $\Hb:\calS_{K}\to\Vect^{\gr}$ is naturally a monoidal functor.

\item\label{equiv} The pull-back functor $\iota^{*}$ gives a monoidal equivalence $\iota^{*}:(\calS,\odot)\to(\calS_{K},\odot_{K})$.

\item\label{Heq} There is a natural isomorphism of monoidal functors $\Hb\circ\iota^{*}\cong\Hb:\calS\to\Vect^{\gr}$. 

\item\label{tau} The functor $\tau^{*}_{K}$ sends $\calS_{K}$ to $\calS_{K}$; $\tau^{*}$ and $\tau^{*}_{K}$ are naturally intertwined under $\iota^{*}$. 

\end{enumerate}
\end{lemma}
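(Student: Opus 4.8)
My plan is to transport the relevant structures through the Pressley--Segal homeomorphism $\iota$, exploiting that $\iota$ respects stratifications and that pointwise multiplication of based polynomial loops models the convolution diagram of $\Gr$. To get the underlying abelian equivalence in (2), I would first recall that each orbit $\Gr_{\l}$ is simply connected (it is an affine bundle over a partial flag variety of $G$), so every perverse sheaf on $\Gr$ that is locally constant along the strata $\{\Gr_{\l}\}$ is automatically $G[[t]]$-equivariant; thus $\calS$ is exactly the category of such perverse sheaves, and $\calS_{K}$ is by construction its analogue on $\Omega$. Since $\iota$ is a homeomorphism carrying $\{\Gr_{\leq\l}\}$ onto $\{\Omega_{\leq\l}\}$ with $\dim_{\RR}\Omega_{\l}=\jiao{2\rho,\l}=\dim_{\RR}\Gr_{\l}$, pull-back $\iota^{*}$ preserves local constancy along strata and perversity, hence restricts to an equivalence of abelian categories $\iota^{*}\colon\calS\isom\calS_{K}$. (One uses tacitly here that constructible $\QQ$-sheaf cohomology on the analytic space $\Gr(\CC)$ agrees with what the Satake formalism uses.)

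For the monoidal structures, the key geometric point I would establish is that the map $(\gamma_{1},\gamma_{2})\mapsto[\wt{\iota}(\gamma_{1}),\iota(\gamma_{2})]$ is a homeomorphism $\Omega\times\Omega\isom G((t))\times^{G[[t]]}\Gr$, which is a restatement of the unique factorization $G((t))=\Omega\cdot G[[t]]$ (bijectivity of $\iota$) plus the continuity coming from Pressley--Segal. Under this homeomorphism the twisted external product $\bK_{1}\,\widetilde{\boxtimes}\,\bK_{2}$ (built from the $G[[t]]$-equivariance of $\bK_{1}$) pulls back to the ordinary box product $\iota^{*}\bK_{1}\boxtimes\iota^{*}\bK_{2}$, because the set-theoretic section $(\gamma_{1},\gamma_{2})\mapsto(\wt{\iota}(\gamma_{1}),\iota(\gamma_{2}))$ trivializes the twisted product; and the convolution map $m\colon G((t))\times^{G[[t]]}\Gr\to\Gr$ corresponds to $\iota\circ m_{K}$, since $\wt{\iota}$ is a group homomorphism, so $\wt{\iota}(\gamma_{1})\cdot\gamma_{2}G[[t]]=\iota(\gamma_{1}\gamma_{2})$. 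As $m$ and $m_{K}$ are proper on the relevant compact pieces, $m_{*}=m_{!}$ and $m_{K*}=m_{K!}$, and base change along the resulting square with vertical homeomorphisms gives a natural isomorphism $\iota^{*}(\bK_{1}\odot\bK_{2})\cong\iota^{*}\bK_{1}\odot_{K}\iota^{*}\bK_{2}$. From this, $\odot_{K}$ has image in $\calS_{K}$; the associativity constraint of $(\calS,\odot)$ transports to one on $(\calS_{K},\odot_{K})$ (which one checks coincides with the evident constraint coming from associativity of $m_{K}$ on $\Omega\times\Omega\times\Omega$); $\iota^{*}$ becomes a monoidal equivalence; and the monoidal structure on $\Hb\colon\calS\to\Vect^{\gr}$ transports to the one on $\Hb\colon\calS_{K}\to\Vect^{\gr}$ asked for in (1).

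For part (3), since $\iota$ is a homeomorphism, $R\iota_{*}\iota^{*}\cong\id$, so $H^{\bullet}(\Omega,\iota^{*}\bK)\cong H^{\bullet}(\Gr,\bK)$ naturally and degree by degree, which gives a natural isomorphism $\Hb\circ\iota^{*}\cong\Hb$; it is monoidal because, under the identification $\Omega\times\Omega\cong G((t))\times^{G[[t]]}\Gr$ above, the K\"unneth isomorphisms on the two sides match. For part (4): since $*$ acts on loops pointwise and $\wt{\iota}$ is the inclusion of Laurent-polynomial loops, $\wt{\iota}\circ\tau_{K}=\tau\circ\wt{\iota}$ as maps $\Omega\to G((t))$; because $\tau$ stabilizes every $G[[t]]$-double coset and $\tau_{K}$ is an involution, this forces $\tau_{K}(\Omega_{\l})=\Omega_{\l}$ for all $\l$, so $\tau_{K}^{*}$ preserves $\calS_{K}$; and, realizing objects of $\calS$ as bi-$G[[t]]$-equivariant complexes on $G((t))$, the same identity $\wt{\iota}\circ\tau_{K}=\tau\circ\wt{\iota}$ yields $\iota^{*}\circ\tau^{*}\cong\tau_{K}^{*}\circ\iota^{*}$ by a direct pull-back computation.

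The step I expect to be the main obstacle is the monoidal comparison: producing the homeomorphism $\Omega\times\Omega\isom G((t))\times^{G[[t]]}\Gr$ and checking that it is compatible at once with the twisted box product and with the multiplication maps, with enough naturality to carry over the associativity and tensor constraints. This rests on the precise Pressley--Segal input (continuity of $\iota$ and of the factorization $G((t))=\Omega\cdot G[[t]]$, and the stratified behaviour $m_{K}(\Omega_{\leq\l}\times\Omega_{\leq\mu})=\Omega_{\leq\l+\mu}$) together with routine bookkeeping of twisted products; everything else is a formal consequence of $\iota$ being a stratification-preserving homeomorphism.
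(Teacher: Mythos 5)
Your argument is correct and follows essentially the same route as the paper's: identify $\calS$ with perverse sheaves locally constant along the strata (the paper cites \cite[Prop.~A.1]{MV}, you re-derive it from simple connectedness of the orbits), build the homeomorphism $\Omega\times\Omega\isom G((t))\twtimes{G[[t]]}\Gr$ (the paper's map $\iota_2$) and transport the convolution diagram across it by base change, and for part (4) use the identity $\wt\iota\circ\tau_K=\tau\circ\wt\iota$ on $G((t))$. The extra detail you supply (strata preservation under $\tau_K$, properness of $m$) is consistent with and subsumed by the paper's proof.
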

\begin{proof}
(1)(2) The functor $\iota^{*}$ identifies $\calS_{K}$ with the category of perverse sheaves on $\Gr$ locally constant along the strata $\Gr_{\lambda}$. By \cite[Proposition A.1]{MV} the latter category is canonically equivalent to $\calS$. To prove (1) and (2), it suffices to give $\iota^{*}$ a monoidal structure. Recall that the convolution product $\odot$ on $\calS$ is defined as
\begin{equation*}
\bK_{1}\odot\bK_{2}=m_{!}(\bK_{1}\boxdot\bK_{2})
\end{equation*}
Here $m:G((t))\twtimes{G[[t]]}\Gr\to\Gr$ is the multiplication map, $\bK_{1}\boxdot\bK_{2}$ is the perverse sheaf on $G((t))\twtimes{G[[t]]}\Gr$ characterized by 
\begin{equation}\label{boxdot}
p'^{*}\bK_{1}\boxdot\bK_{2}=p^{*}\bK_{1}\boxtimes\bK_{2} \textup{ on } G((t))\times\Gr,
\end{equation}
where $p:G((t))\to\Gr, p':G((t))\times\Gr\to G((t))\twtimes{G[[t]]}\Gr$ are the projections. To give $\iota^{*}$ a tensor structure, we need to give a canonical isomorphism
\begin{equation*}
m_{K!}(\iota^{*}\bK_{1}\boxtimes\iota^{*}\bK_{2})\cong \iota^{*}m_{!}(\bK_{1}\boxdot\bK_{2})
\end{equation*}
for any $\bK_{1},\bK_{2}\in\calS$. Note that we have commutative diagram
\begin{equation}\label{mm}
\xymatrix{\Omega\times\Omega\ar[r]^{\iota_{2}}\ar[d]_{m_{K}} & G((t))\twtimes{G[[t]]}\Gr\ar[d]^{m}\\
\Omega\ar[r]^{\iota} & \Gr}
\end{equation}
where $\iota_{2}$ is given by the composition
\begin{equation*}
\Omega\times\Omega\xrightarrow{\wt{\iota}\times\iota} G((t))\times\Gr\xrightarrow{p'} G((t))\twtimes{G[[t]]}\Gr.
\end{equation*}
It is easy to see that $\iota_{2}$ is also a homeomorphism, so \eqref{mm} is a Cartesian diagram. Therefore we have a canonical isomorphism
\begin{eqnarray*}
&&\iota^{*}m_{!}(\bK_{1}\boxdot\bK_{2})\cong m_{K!}\iota^{*}_{2}(\bK_{1}\boxdot\bK_{2})\\
&=&m_{K!}(\wt{\iota}\times\iota)^{*}p'^{*}(\bK_{1}\boxdot\bK_{2})\stackrel{\eqref{boxdot}}{=}m_{K!}(\wt{\iota}\times\iota)^{*}(p^{*}\bK_{1}\boxtimes\bK_{2})\\
&=&m_{K!}(\wt{\iota}^{*}p^{*}\bK_{1}\boxtimes\iota^{*}\bK_{2})=m_{K!}(\iota^{*}\bK_{1}\boxtimes\iota^{*}\bK_{2})
\end{eqnarray*}
It is easy to check these isomorphisms are compatible with the associativity constraints.

(3) is obvious.

(4) For each $\bK\in\calS$, we need to give a functorial isomorphism
\begin{equation*}
\iota^{*}\tau^{*}\bK\isom\tau^{*}_{K}\iota^{*}\bK.
\end{equation*}
Recall $\iota$ factors as $\Omega\xrightarrow{\wt{\iota}}G((t))\xrightarrow{p}\Gr$ and $\wt{\iota}\tau_{K}=\tau\wt{\iota}$, where $\tau:g\mapsto (g^{*})^{-1}$ is the anti-automorphism of $G((t))$. Therefore
\begin{equation*}
\iota^{*}\tau^{*}\bK=\wt{\iota}^{*}p^{*}\tau^{*}\bK=\wt{\iota}^{*}\tau^{*}p^{*}\bK=\tau^{*}_{K}\wt{\iota}^{*}p^{*}\bK=\tau^{*}_{K}\iota^{*}\bK.
\end{equation*}
This gives the desired isomorphism.
\end{proof}

Using part \eqref{equiv} of Lemma \ref{l:GtoK}, one can transfer the commutativity constraint of $(\calS,\odot)$ to $(\calS_{K},\odot_{K})$ making the latter a tensor category. Part \eqref{Heq} of Lemma \ref{l:GtoK} then gives the functor $\Hb$ a tensor (in addition to monoidal) structure.


\section{Proof of Theorem \ref{th:main}}\label{s:proof}
For a monoidal category $(\calC,\otimes)$, we let $(\calC,\otimes^{\sigma})$ be the same category equipped with a new functor
$\otimes^{\sigma}:\calC\times\calC\to\calC$ given by $X\otimes^{\sigma}Y:=Y\otimes X$. It is easy to check that $(\calC, \otimes^{\sigma})$ also carries a monoidal structure.

\begin{lemma}
The functor $\tau^{*}:\calS\to\calS$ carries a natural structure of a monoidal functor
\begin{equation*}
\tau^{*}:(\calS,\odot)\to(\calS,\odot^{\sigma}).
\end{equation*}
\end{lemma}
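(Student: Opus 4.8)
The statement to prove is that $\tau^{*}\colon(\calS,\odot)\to(\calS,\odot^{\sigma})$ is naturally monoidal; that is, for $\bK_{1},\bK_{2}\in\calS$ we must produce a functorial isomorphism $\tau^{*}(\bK_{1}\odot\bK_{2})\cong \tau^{*}\bK_{2}\odot\tau^{*}\bK_{1}$ compatible with the associativity constraints (and with the unit). The basic mechanism is exactly the same base-change manipulation used in the proof of Lemma~\ref{l:GtoK}(4), but now one must track how $\tau$ interacts with the twisted product $G((t))\twtimes{G[[t]]}\Gr$ rather than with a single copy of $\Gr$. The reason the two factors get swapped is visible already at the group level: $\tau$ is an \emph{anti}-automorphism of $G((t))$, so $\tau(g_{1}g_{2})=\tau(g_{2})\tau(g_{1})$, which is precisely the source of the $\odot\leadsto\odot^{\sigma}$ flip.

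\textbf{Key steps.} First I would set up the geometry. Recall $\bK_{1}\odot\bK_{2}=m_{!}(\bK_{1}\boxdot\bK_{2})$ with $m\colon G((t))\twtimes{G[[t]]}\Gr\to\Gr$. Consider the convolution space $Z=G((t))\twtimes{G[[t]]}\Gr$, which one presents as the quotient of $G((t))\times G((t))$ by $G[[t]]\times G[[t]]$ acting by $(h_{1},h_{2})\cdot(g_{1},g_{2})=(g_{1}h_{1}^{-1},h_{1}g_{2}h_{2}^{-1})$; the map $m$ sends $[g_{1},g_{2}]$ to $g_{1}g_{2}G[[t]]$. Define $\sigma_{Z}\colon Z\to Z'=G((t))\twtimes{G[[t]]}\Gr$ by $[g_{1},g_{2}]\mapsto[\tau(g_{2}),\tau(g_{1})]$ — one checks this is well defined because $\tau$ is an anti-automorphism carrying $G[[t]]$ to $G[[t]]$ — and observe the commutative square with $\tau$ on $\Gr$ on the bottom and $m$ (resp. $m$ with factors swapped) on the vertical arrows; since $\tau$ and all these maps are isomorphisms of ind-schemes, the square is Cartesian. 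Base change then gives
\begin{equation*}
\tau^{*}m_{!}(\bK_{1}\boxdot\bK_{2})\cong m_{!}\,\sigma_{Z}^{*}(\bK_{1}\boxdot\bK_{2}).
\end{equation*}
Second, I would identify $\sigma_{Z}^{*}(\bK_{1}\boxdot\bK_{2})$ with $\tau^{*}\bK_{2}\boxdot\tau^{*}\bK_{1}$. This is the twisted-product analogue of Lemma~\ref{l:GtoK}(4): pulling back along the projection $p'\colon G((t))\times\Gr\to Z$ reduces $\boxdot$ to an honest external product $p^{*}(-)\boxtimes(-)$ by \eqref{boxdot}, and on $G((t))\times\Gr$ the map covering $\sigma_{Z}$ is $(g,x)\mapsto(\tau(g)\cdot(\text{shear}),\tau(\text{lift of }x))$; after untwisting, $\sigma_{Z}^{*}$ becomes $\tau^{*}$ on each factor together with the coordinate swap, so $p'^{*}\sigma_{Z}^{*}(\bK_{1}\boxdot\bK_{2})\cong p^{*}\tau^{*}\bK_{2}\boxtimes\tau^{*}\bK_{1}$, whence by the characterizing property \eqref{boxdot} one gets $\sigma_{Z}^{*}(\bK_{1}\boxdot\bK_{2})\cong\tau^{*}\bK_{2}\boxdot\tau^{*}\bK_{1}$. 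Combining the two displays yields the desired isomorphism $\tau^{*}(\bK_{1}\odot\bK_{2})\cong\tau^{*}\bK_{2}\odot\tau^{*}\bK_{1}=\tau^{*}\bK_{1}\odot^{\sigma}\tau^{*}\bK_{2}$. Third, I would check perversity is preserved (so the isomorphism stays inside $\calS$): $\tau^{*}$ differs from an automorphism pullback only by inversion and the pinned automorphism $*$, both of which preserve $\calS$, and this is already implicit in \eqref{tauS}; alternatively invoke Lemma~\ref{l:GtoK}(4) and work in $\calS_{K}$.

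\textbf{Compatibilities and the main obstacle.} Finally one must verify the hexagon-type coherence: that the monoidal structure constraint just built is compatible with the associativity constraints of $(\calS,\odot)$ and $(\calS,\odot^{\sigma})$, and that it respects the unit object $\bC_{0}$ (the skyscraper at the base point, which is $\tau$-fixed). The unit check is immediate. The associativity check is the genuinely fiddly point: one has to compare the two ways of reassociating a triple convolution after applying $\tau^{*}$, and here the anti-automorphism property forces one to reverse the bracketing as well as the order, so the relevant diagram is the "anti-monoidal" coherence pentagon. I expect this bookkeeping — carefully matching the base-change isomorphisms for $\sigma_{Z}$ across the iterated convolution space $G((t))\twtimes{G[[t]]}G((t))\twtimes{G[[t]]}\Gr$ — to be the main obstacle; it is not deep, but it is where all the sign-of-permutation care lives. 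The cleanest way to discharge it is to note that everything in sight is a base change along honest isomorphisms of (ind-)schemes, so the needed coherences are formal consequences of the functoriality of proper base change, exactly as the associativity compatibility was dispatched "easily" at the end of the proof of Lemma~\ref{l:GtoK}.
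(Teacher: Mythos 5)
Your strategy is genuinely different from the paper's: you try to run the base-change argument directly on the twisted product $Z=G((t))\twtimes{G[[t]]}\Gr$, whereas the paper first transports everything to the compact form $\Omega$ via Lemma~\ref{l:GtoK}(\ref{equiv}),(\ref{tau}), where convolution is computed on the honest product $\Omega\times\Omega$ and the swap $(\tau_K\times\tau_K)\circ\sigma$ is obviously a well-defined map. Unfortunately the direct approach runs into a real obstruction: the map $\sigma_Z\colon[g_1,g_2]\mapsto[\tau(g_2),\tau(g_1)]$ you define does \emph{not} descend to a morphism $Z\to Z$. With your presentation $Z=\bigl(G((t))\times G((t))\bigr)/\bigl(G[[t]]\times G[[t]]\bigr)$ with $(h_1,h_2)\cdot(g_1,g_2)=(g_1h_1^{-1},h_1g_2h_2^{-1})$, changing representative sends $(g_1,g_2)$ to $(g_1h_1^{-1},h_1g_2h_2^{-1})$, and applying your formula gives $\bigl(\tau(h_2)^{-1}\tau(g_2)\tau(h_1),\ \tau(h_1)^{-1}\tau(g_1)\bigr)$; to match $\bigl(\tau(g_2),\tau(g_1)\bigr)$ up to the same action you would need $k_1:=\tau(g_2)\,\tau(h_1)^{-1}\tau(g_2)^{-1}\tau(h_2)\in G[[t]]$, which fails because $\tau(g_2)$ does not normalize $G[[t]]$. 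This is the familiar fact that group inversion does not descend to $G/H\to G/H$; it gives $G/H\to H\backslash G$, and an anti-automorphism scrambles left and right cosets. The ``shear'' and ``lift'' you invoke in your second step are symptoms of this: the purported map covering $\sigma_Z$ on $G((t))\times\Gr$ is not $\bigl(G[[t]]\times G[[t]]\bigr)$-equivariant for the relevant actions, so the descent to $\boxdot$ via \eqref{boxdot} does not go through as stated.

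One could try to repair the argument by passing to the full stacky double quotient $G[[t]]\backslash Z/G[[t]]$, where $\sigma_Z$ does become a well-defined $1$-morphism (since left translation on $g_1$ turns into right translation on $\tau(g_1)$ and so on), but then the base change must be carried out in the equivariant derived category and the bookkeeping is substantially heavier. This is exactly the difficulty the paper's proof sidesteps: after applying Lemma~\ref{l:GtoK}, the convolution $\odot_K$ uses $m_{K!}(\bK_1\boxtimes\bK_2)$ on the genuine product $\Omega\times\Omega$, the square \eqref{Cart} is manifestly Cartesian among actual spaces, and proper base change gives the monoidal structure in one line, with the associativity and unit compatibilities being the only remaining (routine) checks. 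So the conceptual idea in your plan---``anti-automorphism $\Rightarrow$ factor swap $\Rightarrow\odot\leadsto\odot^\sigma$''---is correct and is the same mechanism the paper exploits, but the well-definedness of your $\sigma_Z$ is a genuine gap, not just a coherence headache, and it is precisely what motivates the reduction to $\calS_K$.
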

\begin{proof}
Using Lemma \ref{l:GtoK}\eqref{equiv} and \eqref{tau}, it suffices to construct the monoidal structure of $\tau^{*}_{K}$. Let $\sigma:\Omega\times\Omega\to\Omega\times\Omega$ be the involution which interchanges two factors. Since $\tau_{K}$ is an anti-involution, we have a Cartesian diagram
\begin{equation}\label{Cart}
\xymatrix{\Omega\times\Omega\ar[r]^{\tau_{K}\times\tau_{K}}\ar[d]^{m_{K}\circ\sigma} & \Omega\times\Omega\ar[d]^{m_{K}}\\ \Omega\ar[r]^{\tau_{K}} & \Omega}
\end{equation}
Therefore by proper base change, for any $\bK_{1},\bK_{2}\in\calS_{K}$, we have a canonical isomorphism
\begin{equation*}
\tau^{*}_{K}m_{K!}(\bK_{1}\boxtimes\bK_{2})\cong(m_{K}\circ\sigma)_{!}(\tau^{*}_{K}\bK_{1}\boxtimes\tau^{*}_{K}\bK_{2})=m_{K!}(\tau^{*}_{K}\bK_{2}\boxtimes\tau^{*}_{K}\bK_{2}).
\end{equation*}
By the definition of $\odot_{K}$, we get a canonical isomorphism
\begin{equation*}
\tau^{*}_{K}(\bK_{1}\odot_{K}\bK_{2})\isom\tau^{*}_{K}\bK_{2}\odot_{K}\tau^{*}_{K}\bK_{1}.
\end{equation*}
It is easy to check that these isomorphisms are compatible with the associativity constraint and the unit objects of $(\calS_{K},\odot_{K})$ and $(\calS_{K},\odot_{K}^{\sigma})$. This finishes the proof of the lemma.
\end{proof}

Let $\Hbs:(\calS,\odot^{\sigma})\to(\Vect^{\gr},\otimes)$ be the same functor as $\Hb$, except that we change its monoidal structure to the one of $\Hb$ composed with the commutativity constraint of $\otimes$ for $\Vect^{\gr}$, so that $\Hbs$ is also a tensor functor.

\begin{lemma}\label{l:coho} There is a natural isomorphism $\gamma:\Hbs\circ\tau^{*}\isom\Hb$, which preserves the monoidal structures of both functors.
\end{lemma}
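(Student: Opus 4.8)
The plan is to construct the isomorphism $\gamma$ from the geometric content of the anti-involution $\tau$, namely that pulling back by $\tau$ does not change the total cohomology of a complex on $\Gr$, only reinterprets its weight-space decomposition. Concretely, I would work on the compact-form side, using Lemma \ref{l:GtoK}\eqref{Heq} to identify $\Hb\circ\iota^{*}$ with $\Hb$ and Lemma \ref{l:GtoK}\eqref{tau} to identify $\tau^{*}$ with $\tau_{K}^{*}$; it then suffices to produce a natural isomorphism $\Hb\circ\tau_{K}^{*}\isom\Hb$ on $\calS_{K}$. Since $\tau_{K}:\Omega\to\Omega$ is a homeomorphism, proper base change (or simply the fact that $\Hb$ is cohomology of the total space, which $\tau_{K}$ preserves) gives for each $\bK$ a canonical isomorphism $\cohog{*}{\Omega,\tau_{K}^{*}\bK}\cong\cohog{*}{\Omega,\bK}$, functorial in $\bK$ and visibly grading-preserving. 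This is the underlying natural isomorphism $\gamma$.

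The substantive point is the compatibility with monoidal structures. On the left, the monoidal structure on $\Hbs\circ\tau^{*}$ is the composite of: the monoidal structure on $\tau^{*}:(\calS,\odot)\to(\calS,\odot^{\sigma})$ built in the previous lemma (which on the compact side comes from the Cartesian square \eqref{Cart} involving the flip $\sigma$); the monoidal structure on $\Hb:(\calS,\odot^{\sigma})\to(\Vect^{\gr},\otimes)$, which was defined as the tensor structure of $\Hb$ followed by the commutativity constraint of $\Vect^{\gr}$. On the right we just have the tensor structure of $\Hb$. So what must be checked is that, under $\gamma$, the Künneth-type isomorphism for $m_{K!}(\bK_1\boxtimes\bK_2)$ precomposed with the swap on $\Omega\times\Omega$ and the swap on cohomology agrees with the plain Künneth isomorphism. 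I would verify this by chasing the relevant base-change squares: the flip $\sigma$ on $\Omega\times\Omega$ induces on $\cohog{*}{\Omega\times\Omega,\bK_1\boxtimes\bK_2}=\cohog{*}{\Omega,\bK_1}\otimes\cohog{*}{\Omega,\bK_2}$ exactly the commutativity constraint of $\Vect^{\gr}$ (here it is essential that, as noted in the conventions, this constraint is \emph{not} twisted by a Koszul sign, so there is no sign discrepancy), and the remaining identifications are the tautological ones coming from $\wt\iota\tau_K=\tau\wt\iota$ and the Cartesian squares in Lemmas \ref{l:GtoK} and the square \eqref{Cart}.

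The main obstacle I expect is purely bookkeeping: assembling the various canonical isomorphisms (Lemma \ref{l:GtoK}\eqref{Heq}, the monoidal structure of $\tau_K^*$, proper base change along \eqref{Cart}, and the Künneth formula) into a single commuting diagram and confirming there is no hidden sign. Since every ingredient is a genuinely canonical isomorphism and all the relevant squares are Cartesian, the only thing that could go wrong is a sign in the commutativity constraint — and this is precisely why the authors flagged in the conventions that $\Vect^{\gr}$ carries the \emph{unsigned} symmetry. Granting that, the verification is a routine diagram chase, and $\gamma$ is monoidal. I would also remark that, because both $\Hbs\circ\tau^{*}$ and $\Hb$ are fiber functors (forgetting the grading) and $\dG=\Aut^{\otimes}(\upH)$, the isomorphism $\gamma$ will later be interpreted as an automorphism of $\dG$ — but that is the business of the next section, not of this lemma.
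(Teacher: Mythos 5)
Your proof takes exactly the same route as the paper's: reduce to the compact form via Lemma~\ref{l:GtoK}, produce the underlying natural isomorphism $\gamma_K$ from the fact that $\tau_K$ is a homeomorphism of $\Omega$, and then verify compatibility with the monoidal structures. The paper dispatches the last step in one sentence (``obvious from the natural monoidal structure of $\Hb$''); you attempt to unpack the diagram chase, which is the right instinct.

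One caution about your elaboration, though. You assert that the flip $\sigma$ on $\Omega\times\Omega$ induces, under the K\"unneth identification, \emph{exactly} the unsigned commutativity constraint of $\Vect^{\gr}$, and that the paper's unsigned convention therefore removes any sign discrepancy. Stated this baldly, the claim is suspect: the naive pullback by $\sigma$ on cohomology of an external product $\bK_1\boxtimes\bK_2$ is the place where the Koszul sign $(-1)^{pq}$ ordinarily appears. What makes the lemma hold is not that the flip ``happens'' to be unsigned, but rather the interaction of three choices that the paper is careful to fix in advance: the sign-adjusted commutativity constraint on $\calS$ (quoted from \cite[paragraph after Remark 6.2]{MV}), the unsigned constraint on $\Vect^{\gr}$, and the resulting tensor structure on $\Hb$. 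The monoidal structure of $\Hbs$ is \emph{defined} to be that of $\Hb$ post-composed with the unsigned swap precisely so that the two sides match, and checking this means tracking the Cartesian square~\eqref{Cart} and the base-change isomorphisms against the already-established tensor structure of $\Hb$ rather than against a bare K\"unneth formula. So your argument is sound in outline and reaches the right conclusion, but the sentence locating the ``essential'' sign cancellation in the $\Vect^{\gr}$ convention alone should be reformulated; either chase the sign through the adjusted commutativity constraint of $\calS$ explicitly, or, as the authors do, appeal to the formal definition of $\Hbs$ together with the known tensor structure on $\Hb$.
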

\begin{proof} Using Lemma \ref{l:GtoK}, it suffices to give a natural isomorphism $\gamma_{K}:\Hb\circ\tau^{*}_{K}\isom\Hb$ between functors $\calS_{K}\to\Vect^{\gr}$, which preserves the monoidal structures. Since $\tau_{K}$ is an automorphism of $\Omega$, we have a canonical isomorphism $\Hb(\Omega,\tau_{K}^{*}\bK)\isom\Hb(\Omega,\bK)$, which gives the desired $\gamma_{K}$. It remains to check that $\gamma$ preserves the monoidal structures. But this is also obvious from the natural monoidal structure of $\Hb:\calS_{K}\to\Vect^{\gr}$.
\end{proof}

Suppose we have two Tannakian categories $(\calC,\otimes)$ and $(\calD,\otimes)$ equipped with fiber functors $\omega_{\calC}$ and $\omega_{\calD}$ into $\Vect_{k}$ respectively ($k$ is a field). Let $F:(\calC,\otimes)\to(\calD,\otimes)$ be a monoidal functor equipped with a monoidal isomorphism $\phi:\omega_{\calD}\circ F\isom\omega_{\calC}$. Then $\phi$ induces a homomorphism of algebraic groups over $k$:
\begin{eqnarray*}
(F,\phi)^{\#}&:&\Aut^{\otimes}(\omega_{\calD})\to\Aut^{\otimes}(\omega_{\calC})\\
&&(\omega_{\calD}\xrightarrow{h}\omega_{\calD})\mapsto(\omega_{\calC}\xrightarrow{\phi^{-1}}\omega_{\calD}\circ F\xrightarrow{h\circ\id_{F}}\omega_{\calD}\circ F\xrightarrow{\phi}\omega_{\calC}).
\end{eqnarray*}
Note that the tensor morphisms between tensor functors only uses their structures as monoidal functors, therefore the above definition makes sense even if $F$ is only a monoidal functor. More generally, if $\omega_{\calC}$ and $\omega_{\calD}$ take values in another Tannakian category $\calV$ equipped with a fiber functor $\omega:\calV\to\Vect$, then $F$ induces a homomorphism of algebraic groups $(F,\phi)^{\#}:\Aut^{\otimes}(\omega\circ\omega_{\calD})\to\Aut^{\otimes}(\omega\circ\omega_{\calC})$ making the following diagram commutative
\begin{equation*}
\xymatrix{& \Aut^{\otimes}(\omega) \ar[dl]_{\omega_{\calD}^{\#}}\ar[dr]^{\omega_{\calC}^{\#}}\\
\Aut^{\otimes}(\omega\circ\omega_{\calD})\ar[rr]^{(F,\phi)^{\#}} & & \Aut^{\otimes}(\omega\circ\omega_{\calC})}
\end{equation*}

We apply the above remarks to the situation
\begin{equation*}
\xymatrix{(\calS,\odot)\ar[rr]^{\tau^{*}}\ar[dr]_{\Hb} & \ar@{}[d]|{\stackrel{\gamma}{\Leftarrow}} & (\calS,\odot^{\sigma})\ar[dl]^{\Hbs}\\
& \Vect^{\gr}}
\end{equation*}
and get a commutative diagram of algebraic groups over $\QQ$:
\begin{equation*}
\xymatrix{& \Gm \ar[dl]_{2\rho}\ar[dr]^{2\rho}\\
\dG\ar[rr]^{(\tau^{*},\gamma)^{\#}} & & \dG}
\end{equation*}
In other words, $(\tau^{*},\gamma)^{\#}$ is an automorphism of $\dG$ commuting with elements in the torus $2\rho(\Gm)$. Since $\tau^{*}$ does not change the isomorphism classes of irreducible objects in $\calS$, this automorphism must be inner. Therefore $(\tau^{*},\gamma)^{\#}$ determines an element $g\in\dT$ (note that $\dG$ is of adjoint form). 

Using the commutative constraint of $(\calS,\odot)$, the identity functor gives a monoidal equivalence
\begin{equation*}
\id^{\sigma}_{\calS}:(\calS,\odot)\isom(\calS,\odot^{\sigma}).
\end{equation*}
There is a unique natural isomorphism of monoidal functors $\Theta:\tau^{*}\isom\id^{\sigma}_{\calS}$ making
\begin{equation*}
\id_{\Hbs}\circ\Theta=\gamma:\Hbs\circ\tau^{*}\to\Hbs\circ\id^{\sigma}_{\calS}=\Hb.
\end{equation*}
In fact, identifying $\calS$ with $\Rep(\dG)$, the functor $\tau^{*}$ sends $V\in\Rep(\dG)$ (with the action $\alpha:\dG\to\Aut(V)$) to the same vector space $V$ with the new action $\dG\xrightarrow{\Ad(g)}\dG\xrightarrow{\alpha}\Aut(V)$. Then the effect of the natural isomorphism $\Theta$ on $V$ is given by $\alpha(g^{-1}):V\to V$.

\begin{lemma}\label{l:g}
\begin{enumerate}
\item []
\item The element $g\in\dT(\QQ)$ is $(-1)^{\rho}$, the image of $-1$ under the cocharacter $\rho:\Gm\to\dT$ (note that $\dG$ is of adjoint type, so $\rho$ is a cocharacter of $\dT$). 
\item The effect of the natural isomorphism $\Theta$ on the intersection complex $\bC_{\lambda}[\jiao{2\rho,\l}]\in\calS$ is $(-1)^{\jiao{\rho,\lambda}}\Psi_{\lambda}$.
\item The action of the involution $\tau^{*}_{K}$ on $\IH^{2j}(\Omega_{\leq\l})$ is by $(-1)^{j}$.  
\end{enumerate} 
\end{lemma}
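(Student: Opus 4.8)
The plan is to prove the three statements of Lemma~\ref{l:g} in the order (1) $\Rightarrow$ (2) $\Rightarrow$ (3), since (1) is the key structural input and the other two are geometric consequences that feed directly into the computation of $P^{\sigma}_{d_\mu,d_\lambda}$ via \eqref{Grdef}.

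\textbf{Step 1 (identifying $g$).} We already know $(\tau^*,\gamma)^\#$ is an inner automorphism of $\dG$ given by conjugation by some $g\in\dT(\QQ)$, and that it commutes with $2\rho(\Gm)$. The main obstacle is to pin down $g$ exactly. I would argue as follows. Since $\tau^*$ fixes isomorphism classes, $(\tau^*,\gamma)^\#$ acts trivially on the based root datum, so $g$ lies in the center of no larger torus than $\dT$ forces; more precisely, $\Ad(g)$ preserves a pinning, which is automatic here. To compute $g$ itself, restrict attention to a single simple coroot direction: it suffices to check the action of $\tau^*$ on the fundamental representations $V_{\omega_i}$ of $\dG$ (equivalently, on the minuscule or quasi-minuscule $\bC_\lambda$'s), or better, to compute the induced scalar $\gamma$ on the lowest/highest weight lines. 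Concretely, the weight decomposition $\Hb = \oplus_\lambda F_\lambda$ is compatible with $\tau^*$ up to the relabeling $\lambda\mapsto {}^{w_J}\lambda = -\lambda^*$, and the normalization of $\Psi_\lambda$ (identity on $\Gr_\lambda$, i.e.\ on the top stratum) together with the shift convention $\bC_\lambda[\langle2\rho,\lambda\rangle]$ tells us that on the weight-$\lambda$ piece the isomorphism $\gamma$ acts by a sign that depends only on the parity of the cohomological degree, which for the stratum $\Gr_\lambda$ is $\langle 2\rho,\lambda\rangle$ shifted to $0$. Tracking the sign through $\Vect^\gr \cong \Rep(\Gm)$ with the $2\rho$-grading, one finds that on the $\lambda$-weight space $\Theta$ acts by $(-1)^{\langle\rho,\lambda\rangle}$ (this is an integer because $\rho$ is a coweight of the adjoint group $\dG$), which is exactly the effect of $\alpha((-1)^\rho)^{-1}$. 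Hence $g = (-1)^\rho$.

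\textbf{Step 2 (effect of $\Theta$ on $\bC_\lambda$).} Given $g=(-1)^\rho$, part (2) is almost a restatement: by the last displayed formula in the excerpt, $\Theta$ acts on $V=V_\lambda$ (with $\bC_\lambda[\langle2\rho,\lambda\rangle]$ corresponding to $V_\lambda$) by $\alpha(g^{-1}) = \alpha((-1)^{-\rho})$. On the irreducible $V_\lambda$ the central-looking element $(-1)^\rho$ acts, after accounting for its image under $\alpha$, by the scalar $(-1)^{\langle\rho,\lambda\rangle}$ on the highest weight line; but $\Psi_\lambda$ was normalized to be the identity on $\Gr_\lambda$, which corresponds precisely to that highest weight line. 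Since $\Theta$ restricted to $\tau^*\bC_\lambda \isom \bC_\lambda$ must be a scalar multiple of the normalized isomorphism $\Psi_\lambda$ (both being isomorphisms of the simple perverse sheaf $\bC_\lambda$ up to the unshift), comparing on $\Gr_\lambda$ gives $\Theta|_{\bC_\lambda[\langle2\rho,\lambda\rangle]} = (-1)^{\langle\rho,\lambda\rangle}\Psi_\lambda$. The bookkeeping with the shift by $\langle2\rho,\lambda\rangle$ must be done carefully, since that is where a potential extra sign could enter, but $\langle2\rho,\lambda\rangle$ is even so the shift contributes no sign.

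\textbf{Step 3 (action on $\IH^{2j}$).} Finally, apply the functor $\Hb$ to the identity $\Theta|_{\bC_\lambda[\langle2\rho,\lambda\rangle]} = (-1)^{\langle\rho,\lambda\rangle}\Psi_\lambda$ and use Lemma~\ref{l:coho} together with Lemma~\ref{l:GtoK}\eqref{tau}, which identify $\Hb\circ\tau^*$ with $\Hb$ via $\gamma$ and transfer everything to the compact picture. Under $\Hb$, the complex $\bC_\lambda[\langle2\rho,\lambda\rangle]$ has total cohomology $\IH^\bullet(\Omega_{\leq\lambda})$ placed in degrees symmetric about $0$ after the shift, i.e.\ $\IH^{2j}(\Omega_{\leq\lambda})$ sits in graded degree $2j - \langle2\rho,\lambda\rangle$. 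The isomorphism $\gamma$ on $\Vect^\gr \cong \Rep(\Gm)$ coming from the composite with $2\rho$ acts on graded degree $d$ by $(-1)^{d/2}$ when $d$ is even (this is the content of how $\Hbs$ differs from $\Hb$ by the commutativity constraint, equivalently the action of $(-1)^\rho$ pushed through $2\rho$). Combining the scalar $(-1)^{\langle\rho,\lambda\rangle}$ from $\Theta$ with the grading sign $(-1)^{(2j-\langle2\rho,\lambda\rangle)/2} = (-1)^{j}(-1)^{-\langle\rho,\lambda\rangle}$, the $\lambda$-dependent signs cancel and we are left with $\tau^*_K$ acting on $\IH^{2j}(\Omega_{\leq\lambda})$ by $(-1)^j$, as claimed.

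\textbf{Main obstacle.} The genuinely delicate point is Step 1: extracting the precise value $g=(-1)^\rho$ rather than merely ``some element of $\dT$''. This requires being careful about (i) the sign twist Mirkovi\'c--Vilonen insert into the commutativity constraint to make $\Hb$ a tensor functor, (ii) the difference between $\Hb$ and $\Hbs$, and (iii) the normalization of $\Psi_\lambda$ on the open stratum versus its behavior on lower strata. All three conventions must be lined up consistently; once $g$ is correctly identified, parts (2) and (3) are essentially formal consequences obtained by restricting to the top stratum and applying $\Hb$.
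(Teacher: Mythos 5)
Your Step 1 has a genuine gap: you never actually establish where the crucial sign $-1$ comes from. You claim to ``track the sign through $\Vect^{\gr}\cong\Rep(\Gm)$'' and to read off that ``on the weight-$\lambda$ piece $\Theta$ acts by a sign that depends only on the parity of the cohomological degree,'' but this last assertion is exactly the content of part (3), which is supposed to be a consequence of (1), not a premise for it. The commutativity constraint of $\Vect^{\gr}$ is the plain swap (the paper even flags that it is \emph{not} the Koszul one), so it produces no signs; and the normalization of $\Psi_\lambda$ on the open stratum only tells you that the composite $g^{-1}=\Hb(\Theta_\lambda)\circ\tau_K^*$ acts trivially on $\IH^0(\Omega_{\leq\lambda})$. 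That pins down the scalar $c_\lambda$ in terms of $g$ evaluated on the lowest weight $^{w_J}\lambda$, but gives you no independent handle on $g$ itself. Without a second, independent computation your argument is circular.

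What the paper does, and what your proposal is missing, is a concrete geometric calculation in degree $2$: it identifies $\IH^2(\Omega_{\leq\lambda})$ with $\cohog{2}{K/P_\lambda\cap K}$ via the restriction to the open stratum (showing this restriction is an isomorphism in degrees $\leq 2$ by the usual stratification/support estimate), and then shows that $\tau_K^*$ acts there by $-1$ because the induced map on $K/P_\lambda\cap K$ is $k\mapsto k^* w_J$ and $*w_J$ acts by $-1$ on $\Lambda_\QQ\cong\cohog{2}{K/T}$. Combining the $\IH^0$ and $\IH^2$ computations gives $g^{\alpha_i^\vee}=-1$ for all simple roots $\alpha_i^\vee$, which (since $\dG$ is adjoint) determines $g=(-1)^\rho$; plugging back into the $\IH^0$ equation gives $c_\lambda=(-1)^{\jiao{\rho,\lambda}}$, and (3) follows. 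You should add this degree-$2$ computation to make Step 1 a proof. A small secondary slip: $\Gr_\lambda$ (the open stratum) corresponds to the \emph{lowest} weight line $V_\lambda({}^{w_J}\lambda)$, not the highest weight line; this happens not to change the final scalar since $\jiao{\rho,{}^{w_J}\lambda}=-\jiao{\rho,\lambda}$, but the reasoning as stated is off.
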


\begin{proof}
Let $\lambda\in\Lambda^{+}$. The action of $g^{-1}$ on $\Hb(\Omega,\bC_{\lambda})[\jiao{2\rho,\l}]=\IHb(\Omega_{\leq\lambda})[\jiao{2\rho,\l}]=V_{\lambda}\in\Rep(\dG)$ is given by the composition
\begin{equation*}
\IHb(\Omega_{\leq\lambda})\xrightarrow{\tau^{*}_{K}}\IHb(\Omega_{\leq\lambda})=\Hb(\Omega,\tau^{*}_{K}\bC_{\lambda})\xrightarrow{\Hb(\Omega,\Theta_{\lambda})}\Hb(\Omega,\bC_{\lambda})=\IHb(\Omega_{\leq\lambda}).
\end{equation*}
where the first arrow is the pull-back along the anti-involution $\tau_{K}$ of $\Omega_{\leq\lambda}$ and $\Theta_{\lambda}:\tau^{*}_{K}\bC_{\l}\to\bC_{\l}$ is induced from the effect of $\Theta$ on $\bC_{\lambda}[\jiao{2\rho,\l}]\in\calS$. Since the only automorphisms of $\bC_{\lambda}$ are scalars, the isomorphisms $\Theta_{\lambda}$ and $\Psi_{\lambda}$ must be related by $\Theta_{\lambda}=c_{\lambda}\Psi_{\lambda}$ for some constant $c_{\lambda}\in\QQ^{\times}$: the restriction of $\Theta_{\lambda}$ on $\Omega_{\lambda}$ is given by multiplication by $c_{\lambda}$ on the constant sheaf. 

The stratum $\Omega_{\lambda}$ homotopy retracts to the $K$-orbit of $t^{\lambda}$, which is a partial flag variety $G/P_{\lambda}=K/P_{\lambda}\cap K$ (see \cite[Top of page 100]{MV}). The action of $\tau_{K}$ on $\Ad(K)t^{\lambda}\cong K/P_{\lambda}\cap K$ is given by
\begin{equation*}
kt^{\lambda}k^{-1}\mapsto (k^{*}t^{\lambda^{*}}k^{*,-1})^{-1}=k^{*}t^{-\lambda^{*}}k^{*,-1}=k^{*}\dotw_{J}t^{\lambda}\dotw^{-1}_{0}k^{*,-1}.
\end{equation*}
Therefore the induced action of $\tau^{*}_{K}$ on $K/P_{\lambda}\cap K$ is given by $k\mod P_{\lambda}\cap K\mapsto k^{*}w_{J}\mod P_{\lambda}\cap K$ (any lifting $\dotw_{J}\in N_{T\cap K}(K)$ normalizes $P_{\lambda}\cap K$, hence the right translation makes sense).

Let $j_{\lambda}:\Omega_{\lambda}\hookrightarrow\Omega_{\leq\lambda}$ be the inclusion. We have a commutative diagram
\begin{equation}\label{restr}
\xymatrix{\IH^{i}(\Omega_{\leq\lambda})\ar[d]^{c_{\lambda}^{-1}g^{-1}}\ar[r]^{j^{*}_{\lambda}} & \cohog{i}{\Omega_{\lambda}}\ar[r]^{\sim}\ar[d]^{\tau_{K}^{*}} & \cohog{i}{K/P_{\lambda}\cap K}\ar[d]^{w_{J}*}\\
\IH^{i}(\Omega_{\leq\lambda})\ar[r]^{j^{*}_{\lambda}}& \cohog{i}{\Omega_{\lambda}}\ar[r]^{\sim} & \cohog{i}{K/P_{\lambda}\cap K}}
\end{equation}
When $i\leq2$, the horizontal restriction maps are isomorphisms. In fact, from the stratification $\Omega_{\leq\lambda}$ by the open $\Omega_{\lambda}$ and the closed complement $z:\Omega_{<\lambda}\hookrightarrow\Omega_{\leq\lambda}$, we get an exact sequence
\begin{equation}\label{exact}
\cohog{i}{\Omega_{<\lambda},z^{!}\bC_{\lambda}}\to\IH^{i}(\Omega_{\leq\lambda})\to\cohog{i}{\Omega_{\lambda}}\to\cohog{i}{\Omega_{<\lambda},z^{!}\bC_{\lambda}}
\end{equation}
Since $\dim\Omega_{<\lambda}\leq\jiao{2\rho,\lambda}-2$ and $z^{!}\bC_{\lambda}[\jiao{2\rho,\l}]$ lies in perverse degree $\geq1$, $z^{!}\bC_{\l}$ lies in the usual cohomological degree $\geq3$. This implies $\cohog{i}{\Omega_{<\lambda},z^{!}\bC_{\lambda}}=0$ for $i\leq 2$ hence the isomorphism follows from the exact sequence \eqref{exact}.

We claim that the action $\tau^{*}_{K}: k\mapsto k^{*}w_{J}$ on the partial flag variety $K/P_{\lambda}\cap K$ induces $-1$ on $\cohog{2}{K/P_{\lambda}\cap K}$. In fact, $\cohog{2}{K/P_{\lambda}\cap K,\QQ}\hookrightarrow\cohog{2}{K/T,\QQ}\cong\xch(T)_{\QQ}$ by pull-back along the projection $K/T\cap K\to K/P_{\lambda}\cap K$, and this map is equivariant under the $(W\rtimes\Out(G))_{\lambda}$-actions (subscript $\lambda$ means stabilizer of $\lambda$ under the $W\rtimes\Out(G)$-action on $\Lambda=\xcoch(T)$). Since $*w_{J}=w_{J}*\in W\rtimes\Out(G)$ acts on $\Lambda$ by $-1$ by definition, the claim follows.

Since $\tau^{*}_{K}$ induces the identity action on $\cohog{0}{K/P_{\lambda}\cap K}$, $c_{\lambda}^{-1}g^{-1}$ acts by identity on $\IH^{0}(\Omega_{\leq\lambda})$ by diagram \eqref{restr}. Since $\tau^{*}_{K}$ acts by -1 on $\cohog{2}{K/P_{\lambda}\cap K}$ by the above claim, $c_{\lambda}^{-1}g^{-1}$ acts on $\IH^{2}(\Omega_{\leq\lambda})$ by multiplication by $-1$ by diagram \eqref{restr}. 

Recall that the grading on $\IHb(\Omega_{\leq\lambda})[\jiao{2\rho,\l}]\cong V_{\l}$ comes from the action of the cocharacter $2\rho:\Gm\to\dT$ on $V_{\lambda}$. Let $V_{\lambda}(\mu)$ be the weight space of weight $\mu$ under the $\dT$-action, we have
\begin{equation*}
\IH^{i}(\Omega_{\leq\lambda})=\bigoplus_{\jiao{2\rho,\mu}=i}V_{\lambda}(\mu).
\end{equation*}
In particular,
\begin{eqnarray*}
\IH^{0}(\Omega_{\leq\lambda})&=&V_{\lambda}(\leftexp{w_{J}}\lambda);\\
\IH^{2}(\Omega_{\leq\lambda})&=&\bigoplus V_{\lambda}(\leftexp{w_{J}}{\lambda}+\alpha^{\vee}_{i})
\end{eqnarray*}
where the sum over simple roots $\alpha^{\vee}_{i}$ of $\dG$. Therefore, the previous paragraph implies
\begin{eqnarray}
\label{g0}c_{\lambda}^{-1}g^{-\leftexp{w_{J}}{\lambda}}&=&1;\\
\notag c_{\lambda}^{-1}g^{-\leftexp{w_{J}}{\lambda}-\alpha^{\vee}_{i}}&=&-1 \textup{ for all simple roots }\alpha^{\vee}_{i}.
\end{eqnarray}
Comparing these two equations we conclude that $g^{\alpha^{\vee}_{i}}=-1$ for all simple roots $\alpha^{\vee}_{i}$ of $\dG$. On the other hand, $(-1)^{\rho}$ also has this property. Since $\dG$ is adjoint, an element in $\dT$ is determined by its image under simple roots, therefore $g=(-1)^{\rho}$. This proves (2). Plugging this back into \eqref{g0}, we conclude that $c_{\lambda}=((-1)^{\rho})^{-\leftexp{w_{J}}{\lambda}}=(-1)^{\jiao{\rho,-\leftexp{w_{J}}{\lambda}}}=(-1)^{\jiao{\rho,\lambda}}$. This proves (1). Now (3) follows easily from (1) and (2).
\end{proof}

\subsection{Completion of the proof} By \eqref{Grdef}, it suffices to show that $\calH^{2j}_{\mu}\Psi_{\lambda}$ acts on $\calH^{2j}_{\mu}\bC_{\lambda}$ by $(-1)^{j}$.

We extend the partially ordered set $\{\mu\in\Lambda^{+},\mu\leq\lambda\}$ into a totally ordered one, and denote the total ordering still by $\leq$. For any $\mu\leq\lambda$, let $\Omega_{[\mu,\lambda]}=\Omega_{\leq\lambda}-\Omega_{<\mu}$. Similarly $\Omega_{(\mu,\lambda]}=\Omega_{\leq\lambda}-\Omega_{\leq\mu}$. Then we have a long exact sequence
\begin{equation*}
\cdots\to\cohoc{i}{\Omega_{(\mu,\lambda]},\bC_{\lambda}}\to\cohoc{i}{\Omega_{[\mu,\lambda]},\bC_{\lambda}}\to\oplus_{a+b=i}\cohoc{a}{\Omega_{\mu}}\otimes\calH^{b}_{\mu}\bC_{\lambda}\to\cdots
\end{equation*}
Since $\Omega_{\mu}=\Gr_{\mu}$ is an affine space bundle over a partial flag variety $G/P_{\mu}$, we have that $\Hb_{c}(\Omega_{\mu})\cong\Hb(G/P_{\mu})[-\jiao{2\rho,\mu}+\dim G/P_{\mu}]$ which is concentrated in even degrees. We also know that $\calH^{b}_{\mu}\bC_{\lambda}$ vanishes for odd $b$. Therefore the third term in the above exact sequence vanishes for odd $i$. Using decreasing induction for $\mu$ (starting with $\lambda$), we conclude that each $\Hb_{c}(\Omega_{[\mu,\lambda]},\bC_{\lambda})$ is concentrated in even degrees, and the above long exact sequence becomes a short one for even $i$. This gives a canonical decreasing filtration
\begin{equation*}
F^{\geq\mu}\IHb(\Omega_{\leq\lambda}):=\Hb_{c}(\Omega_{[\mu,\lambda]},\bC_{\lambda})
\end{equation*}
with associated graded pieces
\begin{equation}\label{grF}
\gr_{F}^{\mu}\IHb(\Omega_{\leq\lambda})=\Hb_{c}(\Omega_{\mu})\otimes\calH^{\bullet}_{\mu}\bC_{\lambda}.
\end{equation}
The action of $\tau_{K}^{*}$ preserves each $F^{\geq\mu}$, and the induced action on the associated graded pieces takes the form
\begin{equation}\label{graction}
\gr^{\mu}_{F}\tau^{*}_{K}=(\tau_{K}|_{\Omega_{\mu}})^{*}\otimes\calH^{\bullet}_{\mu}\Psi_{\lambda}:\Hb_{c}(\Omega_{\mu})\otimes\calH^{\bullet}_{\mu}\bC_{\lambda}\to\Hb_{c}(\Omega_{\mu})\otimes\calH^{\bullet}_{\mu}\bC_{\lambda}.
\end{equation}
By Lemma \ref{l:g}(3), the action of $\tau^{*}_{K}$ on the top-dimensional cohomology $\cohoc{2\jiao{2\rho,\mu}}{\Omega_{\mu}}\cong\IH^{2\jiao{2\rho,\mu}}(\Omega_{\leq\mu})$ is via multiplication by $(-1)^{\jiao{2\rho,\mu}}=1$; the action of $\tau^{*}_{K}$ on $\cohoc{2\jiao{2\rho,\mu}}{\Omega_{\mu}}\otimes\calH^{2j}_{\mu}\bC_{\l}\subset\gr^{\mu}_{F}\IH^{2j+2\jiao{2\rho,\l}}(\Omega_{\leq\lambda})$, as a subquotient of $\IH^{2j+2\jiao{2\rho,\l}}(\Omega_{\leq\lambda})$, is via multiplication by $(-1)^{j+\jiao{2\rho,\l}}=(-1)^{j}$. Therefore, by \eqref{graction}, $\calH^{2j}_{\mu}\Psi_{\lambda}$ acts on $\calH^{2j}_{\mu}\bC_{\lambda}$ via multiplication by $(-1)^{j}$. This finishes the proof of Theorem \ref{th:main}.

\section{Geometric proof of Theorem \ref{th:Z-q}}\label{s:g2}

The proof of Theorem \ref{th:Z-q} will become transparent once we give the cohomological interpretation of the $Z^{\s}$-polynomials.

\subsection{Affine flag variety via a compact form} We already see that $\Omega=\Omega K\isom\Gr(\CC)$ is a homeomorphism. We need analogous statement for the affine flag variety. Let $T_{c}=K\cap T$ be the maximal torus in $K$. Then the inclusion $K\subset G(\CC)$ induces a homeomorphism $K/T_{c}\isom (G/B)(\CC)$. The multiplication $(g,kT_{c})\mapsto gk\bI$ gives a continuous map $\iota_{\Fl}:\Omega\times K/T_{c}\to\Fl(\CC)$ making the following diagram commutative
\begin{equation}\label{Omfl}
\xymatrix{\Omega\times K/T_{c}\ar[d]^{\pr_{\Omega}}\ar[r]^{\iota_{\Fl}} & \Fl(\CC)\ar[d]^{\pi}\\ \Omega\ar[r]^{\iota} & \Gr(\CC)}
\end{equation}
It is easy to check that $\iota_{\Fl}$ is bijective on points, hence a homeomorphism because it is a continuous map from a compact space to a Hausdorff one. Moreover, $\iota_{\Fl}$ is $T_{c}$-equivariant, where $T_{c}$ acts on $\Omega\times K/T_{c}$ diagonally by conjugation and left translation, and it acts on $\Fl(\CC)$ by left translation.

Let $\Xi=K/T_{c}\times\Omega\times K/T_{c}$, on which $K$ acts diagonally via left translation on $K/T_{c}$ and via conjugation on $\Omega$. The space $\Xi$ also admits an involution $\wt\tau:(k_{1}T_{c},g,k_{2}T_{c})\mapsto(k_{2}^{*}T_{c},(g^{*})^{-1}, k_{1}^{*}T_{c})$, which intertwines the original diagonal $K$-action and the that action pre-composed with $*$. We may rewrite $\left[T_{c}\backslash(\Omega\times K/T_{c})\right]$ as $[K\backslash\Xi]$, so that the homeomorphism $\iota_{\Fl}$ can be rewritten as a map of topological stacks
\begin{eqnarray}\label{maptoFl}
[K\backslash\Xi]&\xrightarrow{\wt\iota_{\Fl}}& T_{c}\backslash\Fl\to\bI\backslash\Fl\\
\notag(k_{1}T_{c},g,k_{2}T_{c})&\mapsto&  T_{c}k_{1}^{-1}gk_{2}\bI \mapsto\bI k_{1}^{-1}gk_{2}\bI.
\end{eqnarray} 
which intertwines the involutions $\wt\tau$ and $\tau$. Since $\iota_{\Fl}$ is a homeomorphism, so is $\wt\iota_{\Fl}$ (by which we mean that it comes from a $K$-equivariant homeomorphism of topological spaces). Via \eqref{maptoFl}, we may define $\Xi_{w}$ (resp. $\Xi_{\leq w}$) as the preimage of $\bI\backslash\Fl_{w}$ (resp. $\bI\backslash\Fl_{\leq w}$) for each $w\in W$. Then $\Xi_{w}$ is $K$-equivariantly homeomorphic to the twisted product $K\twtimes{T_{c}}\Fl_{w}$.

Recall from \eqref{invSw} we have an isomorphism $\Phi_{w}:\tau^{*}\bS_{w}\isom\bS_{w}$ in the category $D_{\bI}(\Fl)$ for $w\in I_{*}$, where $\bS_{w}$ is the shifted intersection cohomology sheaf of $\Fl_{\leq w}$. This induces an involution on $\bI$-equivariant cohomology
\begin{equation*}
\tau^{*}=\Hb_{\bI}(\Fl,\Phi_{w}):\IHb_{\bI}(\Fl_{\leq w})\isom\IHb_{\bI}(\Fl_{\leq w}).
\end{equation*}

\begin{lemma}\label{l:Zs} Let $r$ be the rank of $G$. Then
\begin{equation*}
\sum_{j\in\ZZ}\tr\left(\tau^{*},\IH^{2j}_{\bI}(\Fl_{\leq w})\right)q^{j}=q^{\ell(w)}(1-q)^{e(*)-r}(1+q)^{-e(*)}Z^{\s}_{w}(q^{-1})
\end{equation*}
as elements in $\ZZ[[q]]$. Here $e(*)$ is the dimension of the $(-1)$-eigenspace of $*:\L_{\QQ}\to\L_{\QQ}$.
\end{lemma}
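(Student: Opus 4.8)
### Plan for proving Lemma \ref{l:Zs}

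The plan is to compute both sides as traces on equivariant cohomology, using the localization/filtration package already developed in Section \ref{s:proof}, together with the known formula for $\IHb_{\bI}(\Fl_{\leq w})$ as a module over $\Hb_{\bI}(\pt)$. First I would recall that $\IHb_{\bI}(\Fl_{\leq w})$ is a free module over $\Hb_{\bI}(\pt) = \Hb_{T}(\pt) = \QQ[\frt]$ (symmetric algebra on $\xch(T)_\QQ$, with $\frt$ placed in degree $2$), with basis given by any lift of a $\QQ$-basis of ordinary $\IHb(\Fl_{\leq w})$; concretely $\IHb_{\bI}(\Fl_{\leq w}) \cong \IHb(\Fl_{\leq w}) \otimes \QQ[\frt]$ as graded vector spaces, so that its graded dimension is $Z_w(q) \cdot (1-q^2)^{-r}$ where $r = \rank G = \dim \frt$, since $Z_w(q) = \sum_y P_{y,w}(q) q^{\ell(y)}$ is (up to the shift normalizing $\IH^0 \neq 0$) the Poincar\'e polynomial of $\IHb(\Fl_{\leq w})$.

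Next I would analyze the action of $\tau^* = \Hb_\bI(\Fl, \Phi_w)$ on this module. The anti-involution $\tau$ acts on $\bI$ (equivalently on $T$, hence on $\frt = \xch(T)_\QQ$) through the composite of the pinned involution $*$ with $w_J$, which by the definition in \eqref{star} acts by $-1$ on $\L_\QQ = \xcoch(T)_\QQ$, hence by $-1$ on $\xch(T)_\QQ = \frt$ as well. Wait --- more carefully, $\tau$ on the torus $T$ is $t \mapsto (t^*)^{-1}$, and $*$ on $\xcoch(T)$ is $\lambda \mapsto -\leftexp{w_J}{\lambda}$, so $\tau_* = -*$ sends $\lambda \mapsto \leftexp{w_J}{\lambda}$ on $\xcoch(T)$; dually on $\xch(T)$ it is $w_J$ again, \emph{not} $-1$. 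Thus $\tau^*$ acts on $\Hb_\bI(\pt) = \Sym(\xch(T)_\QQ)$ through the linear involution $w_J$ of $\xch(T)_\QQ$. I would then split $\xch(T)_\QQ$ into the $(+1)$-eigenspace and $(-1)$-eigenspace of $w_J$; the dimension of the $(-1)$-eigenspace of $w_J$ on $\xch(T)_\QQ$ equals that of $w_J$ on $\xcoch(T)_\QQ$, which equals $e(*)$ by the definition of $e(*)$ in Section \ref{ss:Zs} (there $e(*)$ is the dimension of the $(-1)$-eigenspace of $t \mapsto t^*$ on $\L_\QQ$, and $*$ on $\L = \xcoch(T)$ is $-\leftexp{w_J}{\,\cdot\,}$, whose $(-1)$-eigenspace is the $(+1)$-eigenspace of $w_J$; I must recheck this index bookkeeping against \cite[4.5]{L}, and this is precisely where sign conventions must be pinned down). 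The upshot I expect is: the trace of $\tau^*$ on $\Sym^\bullet(\xch(T)_\QQ)$, graded, is $(1-q^2)^{-a}(1+q^2)^{-b}$ for appropriate $a,b$ with $a+b = r$, where $b$ counts the $(-1)$-eigenvalues --- matching the exponents $e(*)-r$ and $-e(*)$ in the statement after replacing $q^2$ by $q$ via the grading shift.

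Then I would reduce the trace on $\IHb_\bI(\Fl_{\leq w})$ to a product of the ``base'' trace on $\Hb_\bI(\pt)$ and a ``stalk'' contribution. The clean way: use the $\bI$-equivariant analogue of the stratification filtration from Section \ref{s:proof}, or better, pass through $\Gr$ via $\pi: \Fl \to \Gr$ and the identification $\Fl_{\leq d_\lambda} = \pi^{-1}(\Gr_{\leq\lambda})$, reducing to $w = d_\lambda$ (the polynomials $P^\s_{y,w}$ and hence $Z^\s_w$ for general $w \in I_*$ should be handled by the same filtration argument, using that $\calH^i_y \bS_w = 0$ for odd $i$). Assembling: the $\bI$-equivariant hypercohomology $\IHb_\bI(\Fl_{\leq w})$ has a filtration with graded pieces $\Hb_{\bI,c}(\Fl_y) \otimes \calH^\bullet_y \bS_w$, on which $\tau^*$ acts as $(\tau|_{\Fl_y})^* \otimes \calH^\bullet_y\Phi_w$. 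Taking traces and summing the geometric series over $y \in I_*$ with $y \leq w$ --- noting $\Hb_{\bI,c}(\Fl_y)$ contributes $q^{\ell(y)}\cdot(\text{trace of } \tau^* \text{ on } \Sym(\frt))$ weighted by the sign $(-1)^{?}$ governed by $\phi(y)$ --- I would match the result to $\sum_{y \leq w} P^\s_{y,w}(q) q^{\ell(y)} \left(\tfrac{q-1}{q+1}\right)^{\phi(y)}$, which is exactly $Z^\s_w(q)$ by \eqref{defineZ}, up to the overall factor $q^{\ell(w)}(1-q)^{e(*)-r}(1+q)^{-e(*)}$ and the substitution $q \leftrightarrow q^{-1}$ coming from the difference between cohomological degree and the $q$-grading convention in $P^\s$.

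The main obstacle I anticipate is \textbf{the sign bookkeeping for the $\phi(y)$-factors and the eigenspace dimensions}: matching the factor $\left(\tfrac{q-1}{q+1}\right)^{\phi(y)}$ in the algebraic definition \eqref{defineZ} against the geometric trace of $\tau^*$ on $\Hb_{\bI,c}(\Fl_y)$ requires knowing precisely how $\tau$ permutes the $T_c$-fixed points / attracting cells inside $\Fl_y$ and how many of the corresponding $\xch(T)_\QQ$-directions $\tau^*$ negates, which is exactly the combinatorics encoded by $e(\bar y *) - e(*) = \phi(y)$ in \cite[4.5]{L}. Concretely I expect to need: for $y \in I_*$ with image $\bar y \in \barW$, the contribution of $\Hb_{\bI,c}(\Fl_y)$ to the $\tau^*$-trace is $q^{\ell(y)} (1-q^2)^{-a(\bar y)}(1+q^2)^{-b(\bar y)}$ with $b(\bar y) = e(\bar y *)$ and $a(\bar y) = r - e(\bar y*)$, so that dividing by the $y = w$... rather, normalizing against a fixed reference term produces exactly $\left(\tfrac{1-q^2}{1+q^2}\right)^{e(*)-e(\bar y*)} = \left(\tfrac{1-q^2}{1+q^2}\right)^{-\phi(y)}$. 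Verifying this identification of $b(\bar y)$ with $e(\bar y *)$ --- i.e.\ that the $\tau^*$-negated equivariant directions over the cell $\Fl_y$ are counted by the $(-1)$-eigenspace of $\bar y w_J$ (equivalently of $w(t^*)$) acting on $\L_\QQ$ --- is the technical heart, and everything else (freeness of the equivariant IH module, the filtration argument, the vanishing of odd stalks) is available from the preceding sections and from \cite{KL2}.
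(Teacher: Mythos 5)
Your plan is essentially the paper's proof: stratify $\IHb_{\bI}(\Fl_{\leq w})$ by the cells indexed by $y\leq w$, use parity vanishing to degenerate the spectral sequence into a filtration whose graded pieces factor as (stalk/costalk of $\bS_{w}$ with the involution $\Phi_{w}$, giving $q^{\ell(w)-\ell(y)}P^{\s}_{y,w}(q^{-1})$) tensor (equivariant cohomology of the stratum), and evaluate the latter trace by an eigenvalue count that produces exactly the $\left(\frac{q-1}{q+1}\right)^{\phi(y)}$ factors, then sum over $y\in I_{*}$. The one point your bookkeeping must settle (and which you correctly flag as the technical heart) is that the trace over the cell indexed by $y$ is governed not by a single involution $w_{J}$ of $\xch(T)$ but by the $y$-dependent involution: the paper identifies $[T_{c}\backslash\Fl_{y}]$ with $[\pt/T_{y}]$ for the stabilizer $T_{y}=\{(\bar{y}(t^{-1}),t)\}\subset T\times T$, on which $\tau$ induces $t\mapsto\bar{y}(t^{*})$, i.e.\ the map $-\bar{y}w_{J}$ on $\L_{\QQ}$ (not $\bar{y}w_{J}$ as your parenthetical says), whose $(-1)$-eigenspace has dimension $e(\bar{y}*)$ by definition, which is precisely your claimed $b(\bar{y})=e(\bar{y}*)$.
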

\begin{proof}
By \eqref{maptoFl}, $\IHb_{\bI}(\Fl_{\leq w})\cong\IHb_{K}(\Xi_{\leq w})$. We think of $\bS_{w}$ as the intersection complex of $\Xi_{\leq w}$ which is the constant sheaf on $\Xi_{w}$. The stratification of $\Xi$ by $\Xi_{\leq w}$ gives a spectral sequence with the $E_{2}$-page consisting of $\Hb_{K}(\Xi_{y},i^{!}_{y}\bS_{w})$ abutting to $\IHb_{K}(\Xi_{\leq w})$. Here $i_{y}:\Xi_{y}\hookrightarrow\Xi$ is the inclusion. Since $i^{!}_{y}\bS_{w}$ is a sum of constant sheaves on $\Xi_{y}$ concentrated on even degrees, and $\Hb_{K}(\Xi_{y})\cong\Hb_{T}(\pt)$ is also concentrated in even degrees, the spectral sequence necessarily degenerates at $E_{2}$. Therefore $\IHb_{K}(\Xi_{\leq w})$ admits an increasing filtration indexed by $\{y\leq w\}$ with $\gr_{y}\IHb_{K}(\Xi_{\leq w})=\Hb_{K}(\Xi_{y},i^{!}_{y}\bS_{w})$. The involution $\wt\tau^{*}$ on $\IHb_{K}(\Xi_{\leq w})$ maps $\gr_{y}$ to $\gr_{(y^{*})^{-1}}$, therefore its trace is the sum of traces on $\gr_{y}$ for $y\in I_{*}$, i.e., 
\begin{eqnarray}\label{sumy}
\sum_{j}\tr(\wt\tau^{*},\IH^{2j}_{K}(\Xi_{\leq w}))q^{j}=\sum_{y\leq w,y\in I_{*}}\sum_{j\in\ZZ}\tr(\wt\tau^{*},\upH^{2j}_{K}(\Xi_{y},i^{!}_{y}\bS_{w}))q^{j}.
\end{eqnarray}
Verdier duality gives an isomorphism in $D_{K}(\Xi_{y})$ commuting with the involutions induced by $\Phi_{w}$
\begin{equation*}
i^{!}_{y}\bS_{w}\cong\bigoplus_{k}\calH^{2\ell(w)-2\ell(y)-2k}\bS_{w}[-2k].
\end{equation*}
Hence
\begin{eqnarray}\label{twoprod}
&&\sum_{j\in\ZZ}\tr(\wt\tau^{*},\upH^{2j}_{K}(\Xi_{y},i^{!}_{y}\bS_{w}))q^{j}\\
\notag &=&\sum_{k\in\ZZ}\tr(\calH^{2\ell(w)-2\ell(y)-2k}_{y}\Phi_{w}, \calH^{2\ell(w)-2\ell(y)-2k}_{y}\bS_{w})q^{k}\sum_{j\in\ZZ}\tr(\wt\tau^{*},\upH^{2k}_{K}(\Xi_{y}))q^{j}\\
\notag &=&q^{\ell(w)-\ell(y)}P^{\s}_{y,w}(q^{-1})\sum_{j\in\ZZ}\tr(\wt\tau^{*},\upH^{2j}_{K}(\Xi_{y}))q^{j}.
\end{eqnarray}
Since $[K\backslash\Xi_{y}]\cong[T_{c}\backslash\Fl_{y}]$, it has the same cohomology as $[T\backslash T\doty T/T]$, where $\doty$ is a lifting of $y$ to $G((t))$. Let $T_{y}=\{(\bary(t^{-1}),t),t\in T\}\subset T\times T$ ($\bary$ is the image of $y$ in $\barW$) be the stabilizer of the $T\times T$-action on $T\doty T$ via left and right translations. The involution $\tau$ on $[T\backslash T\doty T/T]\cong[\pt/T_{y}]$ is then induced by the involution $(\bary(t^{-1}),t)\mapsto((t^{*})^{-1},\bary(t^{*}))$ of $T_{y}$. We identify $T_{y}$ with $T$ via the second projection, then the involution of $[\pt/T_{y}]=[\pt/T]$ induced by $\tau^{*}$ comes from $t\mapsto \bary(t^{*})$. This involution gives a decomposition $\frt=\frt_{+}\oplus\frt_{-}$ of the Lie algebra $\frt\cong\L_{\CC}$ of $T$ into $(+1)$ and $(-1)$-eigenspaces, with dimensions $r-e(\bary*)$ and $e(\bary*)$ respectively (see remarks following \eqref{zeta} for notations). Since
\begin{equation*}
\Hb_{K}(\Xi_{y})\cong\Hb_{T_{y}}(\pt)\cong\Sym(\frt^{\vee}[-2])\cong\Sym(\frt_{+}^{\vee}[-2])\otimes\Sym(\frt_{-}^{\vee}[-2]),
\end{equation*}
therefore
\begin{equation*}
\sum_{j\in\ZZ}\tr(\tau^{*},\upH^{2j}_{K}(\Xi_{y}))q^{j}=\sum_{j\geq0}\dim\Sym^{j}(\frt^{\vee}_{+})q^{j}\sum_{k\geq0}\dim\Sym^{k}(\frt^{\vee}_{-})(-q)^{k}=(1-q)^{e(\bary*)-r}(1+q)^{-e(\bary*)}.
\end{equation*}
Plugging this into \eqref{twoprod} and then into \eqref{sumy}, we get
\begin{eqnarray*}
&&\sum_{j}\tr(\tau^{*},\IH^{2j}_{K}(\Xi_{\leq w}))q^{j}\\
&=&\sum_{y\leq w,y\in I_{*}}q^{\ell(w)-\ell(y)}P^{\s}_{y,w}(q^{-1})(1-q)^{e(\bary*)-r}(1+q)^{-e(\bary*)}\\
&=&q^{\ell(w)}(1-q)^{-r}\sum_{y\leq w,y\in I_{*}}P^{\s}_{y,w}(q^{-1})q^{-\ell(y)}\left(\frac{q^{-1}-1}{q^{-1}+1}\right)^{e(\bary*)}\\
&=&q^{\ell(w)}(1-q)^{-r}\left(\frac{q^{-1}-1}{q^{-1}+1}\right)^{e(*)}Z^{\s}_{w}(q^{-1})=q^{\ell(w)}(1-q)^{e(*)-r}(1+q)^{-e(*)}Z^{\s}_{w}(q^{-1}).
\end{eqnarray*} 
\end{proof}

In the situation of the affine Grassmannian, the isomorphism \eqref{invC} induces an involution on the global sections $\tau^{*}_{K}:\IHb(\Omega_{\leq\l})\isom\IHb(\Omega_{\leq\l})$.


\begin{lemma}\label{l:tZ} For $\l\in\L^{+}$, we have
\begin{equation}\label{Poin}
\sum_{i\in\ZZ}\tr(\tau^{*}_{K},\IH^{2i}(\Omega_{\leq\l}))q^{i}=\tZ^{\s}_{d_{\l}}(q).
\end{equation}
\end{lemma}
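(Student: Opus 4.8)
The plan is to reduce \eqref{Poin} to Lemma \ref{l:Zs} by relating the $\bI$-equivariant intersection cohomology $\IHb_{\bI}(\Fl_{\leq d_\l})$ to the (non-equivariant) intersection cohomology $\IHb(\Omega_{\leq\l})$ on the affine Grassmannian, compatibly with the $\tau$-actions. The key geometric input is the isomorphism $\phi_\l:\pi^*\bC_\l\cong\bS_{d_\l}$ of Section \ref{s:geom} together with the fibration $\pi:\Fl_{\leq d_\l}\to\Gr_{\leq\l}$ whose fibers are the flag variety $G/B\cong K/T_c$. First I would record that $\IHb_{\bI}(\Fl_{\leq d_\l})\cong\IHb_{K}(\Xi_{\leq d_\l})$ via \eqref{maptoFl}, and that the diagram \eqref{Omfl} exhibits $\Xi_{\leq d_\l}$ as a $(K/T_c)$-bundle over $\Omega_{\leq\l}$; pushing forward $\bS_{d_\l}\cong\pr_\Omega^*\bC_\l$ along $\pr_\Omega$ (tensored with the cohomology of the fiber) gives
\begin{equation*}
\IHb_{\bI}(\Fl_{\leq d_\l})\cong\IHb_K(\Xi_{\leq d_\l})\cong\Hb_K(\Omega_{\leq\l})\otimes_{\Hb_K(\pt)}\Hb_K(K/T_c\times K/T_c),
\end{equation*}
where I use that the $K$-equivariant cohomology of $\Omega_{\leq\l}$ with coefficients in $\bC_\l$ decomposes because both the equivariant cohomology of a point and the stalks are in even degrees (this evenness is exactly the mechanism already exploited in the proof of Lemma \ref{l:Zs}).

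Next I would analyze the $\tau$-action term by term. On the $\Omega$-factor, $\tau^*_K$ acts on $\IHb(\Omega_{\leq\l})$ by $(-1)^i$ in degree $2i$ by Lemma \ref{l:g}(3) — but I must be careful, since here we want the \emph{equivariant} version; the non-equivariant identity \eqref{Poin} is what we're computing, and the role of Lemma \ref{l:g}(3) is precisely to identify $\sum_i \tr(\tau^*_K,\IH^{2i}(\Omega_{\leq\l}))q^i = \tZ^\s_{d_\l}(q)$, which is the claim, so I cannot simply invoke it. Instead the correct route is: combine Lemma \ref{l:Zs}, which computes $\sum_j\tr(\tau^*,\IH^{2j}_{\bI}(\Fl_{\leq d_\l}))q^j$ in terms of $Z^\s_{d_\l}$, with a direct computation of the left side via the bundle structure. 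The flag-variety factor $K/T_c$ contributes, under the involution $k\mapsto k^*w_J$ studied in Lemma \ref{l:g}, a factor that is a ratio of the form $(1-q)^{e_1}(1+q)^{e_2}$ type (coming from the $\tau$-action on $\Hb(G/B)$, which is the graded regular representation with the $w_J*$-twist); the point-stratum/equivariance factor contributes $(1-q)^{e(*)-r}(1+q)^{-e(*)}$ as in Lemma \ref{l:Zs}. So the scheme is: write
\begin{equation*}
\sum_j\tr(\tau^*,\IH^{2j}_{\bI}(\Fl_{\leq d_\l}))q^j = \Big(\sum_i\tr(\tau^*_K,\IH^{2i}(\Omega_{\leq\l}))q^i\Big)\cdot A(q),
\end{equation*}
identify the correction factor $A(q)$ explicitly from the $\tau$-equivariant Poincaré series of $\Hb_K(K/T_c\times K/T_c)\cong\Hb_{T_c}(K/T_c)$, and then solve for the bracketed quantity using Lemma \ref{l:Zs}.

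The main obstacle I expect is getting the bookkeeping of the various eigenspace dimensions exactly right — specifically, showing that the correction factor $A(q)$ coming from the $(K/T_c)$-fiber and the equivariance, when divided into the right-hand side of Lemma \ref{l:Zs}, collapses to precisely $\tZ^\s_{d_\l}(q)$ and in particular to $Z^\s_{d_\l}(q)/Z^\s_{w_J}(q)$ as in \eqref{definetZ}. Concretely one has to verify that the $\tau^*$-action on $\Hb(K/T_c)\cong\Hb(G/B)$ contributes a factor whose $q\mapsto q^{-1}$-normalized form is $q^{\ell(w_J)}(1-q)^{e(*)-r}(1+q)^{-e(*)}Z^\s_{w_J}(q^{-1})$ — i.e., that the flag variety $\Fl_{\leq w_J}$ case of Lemma \ref{l:Zs} is exactly the fiber contribution. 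This is plausible because $\Fl_{\leq w_J}\cong G/B$ and the involution restricts compatibly, so the fiber factor \emph{is} the $w=w_J$ instance of Lemma \ref{l:Zs}; granting that, dividing the $d_\l$ instance by the $w_J$ instance of Lemma \ref{l:Zs} and comparing with \eqref{definetZ} yields \eqref{Poin} after cancelling the common $(1-q)^{e(*)-r}(1+q)^{-e(*)}$ factors and the $q^{\ell(d_\l)-\ell(w_J)}=q^{\ell(d_\l)}/q^{\ell(w_J)}$ — but one must double-check the bundle decomposition is $\tau$-equivariant and that no sign or shift is lost, which is where I'd be most worried about an off-by-one.
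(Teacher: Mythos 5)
Your plan follows the paper's own proof almost exactly: identify $\IHb_{\bI}(\Fl_{\leq d_\l})$ with $\IHb(\Omega_{\leq\l})\otimes\Hb_K(K/T_c\times K/T_c)$ compatibly with the involutions, recognize the fiber factor as the $w=w_J$ case of Lemma \ref{l:Zs}, divide, and compare with \eqref{definetZ}. That part of the bookkeeping is fine.

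The genuine gap is at the very end, precisely at the ``off-by-one'' you flagged but did not resolve. Dividing the $w=d_\l$ instance of Lemma \ref{l:Zs} by the $w=w_J$ instance does \emph{not} directly give \eqref{Poin}; it gives
\begin{equation*}
Q_\l(q):=\sum_{j}\tr\bigl(\tau^*_K,\IH^{2j}(\Omega_{\leq\l})\bigr)q^{j}
= q^{\ell(d_\l)-\ell(w_J)}\,\frac{Z^\s_{d_\l}(q^{-1})}{Z^\s_{w_J}(q^{-1})}
= q^{\ell(d_\l)-\ell(w_J)}\,\tZ^\s_{d_\l}(q^{-1}).
\end{equation*}
The monomial $q^{\ell(d_\l)-\ell(w_J)}$ does not cancel, and more importantly the argument comes out as $q^{-1}$, not $q$, because Lemma \ref{l:Zs} is formulated with $Z^\s_w(q^{-1})$. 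To convert this into the asserted $\tZ^\s_{d_\l}(q)$ you need an additional input: Poincar\'e duality for $\IHb(\Omega_{\leq\l})$, which is compatible with the involution $\tau^*_K$ and gives the palindromicity $Q_\l(q)=q^{\jiao{2\rho,\l}}Q_\l(q^{-1})$; combined with $\ell(d_\l)-\ell(w_J)=\jiao{2\rho,\l}$ (i.e.\ $\dim\Fl_{\leq d_\l}=\dim G/B+\dim\Gr_{\leq\l}$), substituting $q^{-1}$ for $q$ in the displayed identity and then applying this palindromicity yields $Q_\l(q)=\tZ^\s_{d_\l}(q)$. Without this duality step the argument terminates one equation short of the claim, so you should supply it explicitly rather than leave it as a worry about a possible shift.
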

\begin{proof}
The map \eqref{maptoFl} induces an isomorphism on intersection cohomology commuting with the relevant involutions:
\begin{equation}\label{3ten}
\IHb_{\bI}(\Fl_{\leq d_{\l}})\cong\IHb_{K}(\Xi_{\leq d_{\l}})\cong\IHb_{K}(\Omega_{\leq\l})\otimes_{\Hb_{K}(\pt)}\Hb_{K}(K/T_{c}\times K/T_{c}).
\end{equation}
where the last equality comes from the degeneration of the Leray spectral sequence at $E_{2}$ since all the relevant cohomology groups are concentrated in even degrees. Note that in \eqref{3ten}, the involution on $\Hb_{K}(K/T_{c}\times K/T_{c})$ is induced by $(k_{1}T_{c},k_{2}T_{c})\mapsto(k_{2}^{*}T_{c},k_{1}^{*}T_{c})$, and the involution on $\Hb_{K}(\pt)$ is induced by the involution $*$ of $K$.

Another spectral sequence argument shows that we have an isomorphism
\begin{equation*}
\IHb_{K}(\Omega_{\leq\l})\cong\Hb_{K}(\pt)\otimes\IHb(\Omega_{\leq\l})
\end{equation*}
commuting with the obvious involutions (the one on $\Hb_{K}(\pt)$ is again induced by $*$, and the ones involving $\Omega_{\leq\l}$ are given by $\tau^{*}_{K}$). Combining this with \eqref{3ten} we get an isomorphism
\begin{equation*}
\IHb_{\bI}(\Fl_{\leq d_{\l}})\isom\IHb(\Omega_{\leq\l})\otimes\Hb_{K}(K/T_{c}\times K/T_{c})
\end{equation*}
intertwining the involutions on both sides which we specified before. The special case $\l=0$, $d_{\l}=w_{J}$ gives $\IHb_{\bI}(\Fl_{\leq w_{J}})\cong\Hb_{K}(K/T_{c}\times K/T_{c})$. Therefore
\begin{equation*}
\IHb_{\bI}(\Fl_{\leq d_{\l}})\isom\IHb(\Omega_{\leq\l})\otimes\IHb_{\bI}(\Fl_{\leq w_{J}})
\end{equation*}
commuting with the relevant involutions. Taking the Poincar\'e polynomials with respect to the traces of these involutions, and using Lemma \ref{l:Zs}, we get
\begin{eqnarray*}
&&q^{\ell(d_{\l})}(1-q)^{e(*)-r}(1+q)^{-e(*)}Z^{\s}_{d_{\l}}(q^{-1})\\
&=&q^{\ell(w_{J})}(1-q)^{e(*)-r}(1+q)^{-e(*)}Z^{\s}_{w_{J}}(q^{-1})\sum_{j\in\ZZ}\tr(\tau^{*}_{K},\IH^{2j}(\Omega_{\leq\l}))q^{j}.
\end{eqnarray*}
In view of the definition of $\tZ^{\s}_{d_{\l}}(q)$ in \eqref{definetZ}, we get
\begin{equation*}
\sum_{j\in\ZZ}\tr(\tau^{*}_{K},\IH^{2j}(\Omega_{\leq\l}))q^{j}=q^{\ell(d_{\l})-\ell(w_{J})}\tZ^{\s}_{d_{\l}}(q^{-1}).
\end{equation*}
Let $Q_{\l}(q)$ denote the left side. Substituting $q^{-1}$ for $q$ in the above, we get
\begin{equation*}
Q_{\l}(q^{-1})=q^{-\ell(d_{\l})+\ell(w_{J})}\tZ^{\s}_{d_{\l}}(q).
\end{equation*}
Poincar\'e duality for $\IHb(\Omega_{\leq\l})$ (which has dimension $\jiao{2\rho,\l}$) implies $Q_{\l}(q)=q^{\jiao{2\rho,\l}}Q_{\l}(q^{-1})$. Therefore
\begin{equation*}
Q_{\l}(q)=q^{\jiao{2\rho,\l}}Q_{\l}(q^{-1})=q^{\jiao{2\rho,\l}+\ell(w_{J})-\ell(d_{\l})}\tZ^{\s}_{d_{\l}}(q).
\end{equation*}
Since $\ell(d_{\l})=\ell(w_{J})+\jiao{2\rho,\l}$ (i.e, $\dim\Fl_{\leq d_{\l}}=\dim G/B+\dim\Gr_{\leq\lambda}$), the above equality implies \eqref{Poin}.
\end{proof}

\subsection{Completion of the proof} By Lemma \ref{l:g}(3), the involution $\tau^{*}_{K}$ acts on $\IH^{2j}(\Omega_{\leq\l})$ via $(-1)^{j}$. Therefore, by Lemma \ref{l:tZ}, we have
\begin{equation}\label{tZsPoin}
\tZ^{\s}_{d_{\l}}(q)=\sum_{j\in\ZZ}(-1)^{j}\dim\IH^{2j}(\Omega_{\leq\l})q^{j}=\sum_{j\in\ZZ}\dim\IH^{2j}(\Omega_{\leq\l})(-q)^{j}.
\end{equation}
On the other hand, the argument using the filtration \eqref{grF} shows that $Z_{d_{\l}}(q)$ is the Poincar\'e polynomial for $\IHb(\Fl_{\leq d_{\l}})$:
\begin{equation*}
Z_{w}(q)=\sum_{j\in\ZZ}\dim\IH^{2j}(\Fl_{\leq w})q^{j}.
\end{equation*}
Using the homeomorphism $\iota_{\Fl}$ and the diagram \eqref{Omfl}, we have $\IHb(\Fl_{\leq d_{\l}})\cong\IHb(\Omega_{\leq\l})\otimes\Hb(K/T_{c})\cong\IHb(\Omega_{\l})\otimes\IHb(\Fl_{\leq w_{J}})$. Therefore $Z_{d_{\l}}(q)$ is the product of $Z_{w_{J}}(q)$ with the Poincar\'e polynomial of $\IHb(\Omega_{\leq\l})$. By the definition of $\tZ_{d_{\l}}(q)$ in \eqref{definetZ}, we have
\begin{equation}\label{tZPoin}
\tZ_{d_{\l}}(q)=Z_{d_{\l}}(q)Z_{w_{J}}(q)^{-1}=\sum_{j\in\ZZ}\dim\IH^{2j}(\Omega_{\leq\l})q^{j}.
\end{equation}
The theorem now follows by comparing \eqref{tZsPoin} and \eqref{tZPoin}.

\section{Algebraic Proof of Theorem \ref{th:Z-q}}\label{s:a2}
Now we start the algebraic proof of Theorem \ref{th:Z-q}. Using \cite[3.6(f)]{L} and Theorem \ref{th:main} we see that
\begin{eqnarray*}
\tZ^\s_{d_\l}(q)&=&\sum_{\mu\in\L^+;d_{\mu}\leq d_\l}P_{d_{\mu},d_\l}^\s(q)\zeta\left(\sum_{y\in W_{J}\mu W_{J};y\in I_*}a_y\right)Z^\s_{w_{J}}(q)^{-1}\\
&=& \sum_{\mu\in\L^+;d_{\mu}\leq d_\l}P_{d_{\mu},d_\l}(-q)\zeta\left(\sum_{y\in W_{J}\mu W_{J};y\in I_*}a_y\right)Z^\s_{w_{J}}(q)^{-1}.
\end{eqnarray*}
On the other hand 
\begin{equation*}
\tZ_{d_\l}(-q)=\sum_{\mu\in\L^+;d_{\mu}\leq d_\l}
P_{d_{\mu},d_\l}(-q)\sum_{y\in W_{J}\mu W_{J}}(-q)^{\ell(y)}Z_{w_{J}}(-q)^{-1}.
\end{equation*}
Hence to prove Theorem \ref{th:Z-q} it is enough to show that for any double coset $W_{J}\mu W_{J}$ we have
\begin{equation}\label{5.3a}
\zeta\left(\sum_{y\in W_{J}\mu W_{J}\cap I_*}a_y\right)Z^\s_{w_{J}}(q)^{-1}=\sum_{y\in W_{J}\mu W_{J}}(-q)^{\ell(y)}Z_{w_{J}}(-q)^{-1}.
\end{equation}

We fixed such a double coset $W_{J}\mu W_{J}$ for the rest of this section, where $\mu\in\L^{+}\cap W_{J}\mu W_{J}$ is the unique dominant translation. Let $d=d_{\mu}$ (resp. $b$) be the element of maximal (resp. minimal) length in $W_{J}\mu W_{J}$. 

We shall be interested also in some parabolic analogues of $Z_w(q),Z^\s_w(q)$. For any $H\subsetneqq S$ let $W_H$ be the subgroup of $W$ generated by $H$ so that $(W_H,H)$ is a finite Coxeter group;
let $w_H$ be the longest element of $W_H$. We also set $\bP_H=\sum_{x\in W_H}q^{\ell(x)}\in\NN[q]$ so that $Z_{w_H}(q)=\bP_H(q)$. Recall that $J=S-\{s_{0}\}$, and our previous notation $W_{J}, w_{J}$ is consistent with the new notation. 

If in addition we are given an involution $\t:W_H\to W_H$ leaving $H$ stable, we set (as in 
\cite[5.1]{L}) $\bP_{H,\t}=\sum_{x\in W_H;\t(x)=x}q^{\ell(x)}\in\NN[q]$. By
\cite[5.9]{L} we have $Z^\s_{w_{J}}(q)=\bP_J(q^2)\bP_{J,*}(q)^{-1}$ (we use also that $P_{y,w_{J}}^\s(q)=1$ for any $y\in W_{J}$, see \cite[3.6(f)]{L}). 

Let $H=J\cap bJb^{-1}$. Let $\ep:W_{H^*}\to W_{H^*}$ be the involution $y\mapsto b^{-1}y^*b$ ($H^{*}$ is the image of $H$ under $*$). From \cite[5.10]{L} we have
\begin{equation*}
\zeta(\sum_{y\in W_{J}\mu W_{J}\cap I_*}a_y)=\zeta(a_b)\bP_J(q^2)\bP_{H^*,\ep}(q)^{-1}.
\end{equation*}
Similarly,
\begin{equation*}
\sum_{y\in W_{J}\mu W_{J}}q^{\ell(y)}=q^{\ell(b)}\bP_J(q^{2})\bP_{H^*}(q)^{-1}.
\end{equation*}
We see that \eqref{5.3a} is equivalent to the following statement:
\begin{equation}\label{5.3b}
\zeta(a_b)\bP_{J,*}(q)\bP_{H^*,\ep}(q)^{-1}=(-q)^{\ell(b)}\bP_J(-q)\bP_{H^*}(-q)^{-1}.
\end{equation}

\begin{lemma}\label{5.4a} The involution $\ep$ on $W_{H^{*}}$ is the same as $\Ad(w_{H^*})$; i.e., $b^{-1}y^*b=w_{H^*}yw_{H^*}$ for all $y\in W_{H^{*}}$.
\end{lemma}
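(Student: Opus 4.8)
The identity to be shown, $b^{-1}y^{*}b=w_{H^{*}}yw_{H^{*}}$ for all $y\in W_{H^{*}}$, is an identity of automorphisms of the finite Coxeter group $W_{H^{*}}$; both sides are easily seen to be automorphisms preserving the set of simple reflections $H^{*}$, so it suffices to check them on a generating set and to identify the two induced permutations of $H^{*}$. First I would unwind what $b$ is: since $b$ is the element of minimal length in the double coset $W_{J}\mu W_{J}$ and $d=d_{\mu}=b w_{H}$ (this is the standard parabolic double coset structure, with $H=J\cap b^{-1}Jb$ so that $W_{H}$ is the stabilizer of the minimal coset representative acting on the right), conjugation by $b$ carries $W_{H}$ isomorphically onto $W_{b H b^{-1}}\subseteq W_{J}$. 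Applying $*$ and using that $*$ stabilizes $J$ and $S$, the map $y\mapsto b^{-1}y^{*}b$ does indeed land in $W_{H^{*}}$; the content of the lemma is that this automorphism of $W_{H^{*}}$ coincides with conjugation by the longest element $w_{H^{*}}$.

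The key mechanism is that $d=d_{\mu}\in I_{*}$, i.e. $d^{*}=d^{-1}$. Writing $d=bw_{H}$ and taking $*$, we get $w_{H^{*}}^{-1}b^{*,-1}=(bw_{H})^{*}=d^{*}=d^{-1}=w_{H}^{-1}b^{-1}$, where I use $w_{H}^{*}=w_{H^{*}}$ (the longest element of a parabolic is sent by $*$ to the longest element of the image parabolic, since $*$ preserves length). Hence $b^{*}=b\,w_{H}\,w_{H^{*}}^{-1}$, equivalently $b^{*}w_{H^{*}}=bw_{H}=d$. Now for $y\in W_{H^{*}}$ I would compute $b^{-1}y^{*}b$ by first passing to $d$: from $d=b^{*}w_{H^{*}}$ one also gets, applying $*$ to $d^{*}=d^{-1}$, the relation $d=w_{H}b^{*}$ (the other factorization of the element of maximal length in its double coset as longest-element-times-minimal-rep on the left). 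Combining the two factorizations $d=bw_{H}=w_{H}b^{*}$ (note $H^{*}$ and the left/right parabolic structure match because $*$ is an involution stabilizing $J$), one reads off $b^{*}=w_{H}^{-1}bw_{H}$ on the relevant side; feeding this into $b^{-1}y^{*}b$ and using that conjugation by $b$ intertwines $W_{H^{*}}$-type data on the two sides, the braid-word / length bookkeeping collapses $b^{-1}y^{*}b$ to $w_{H^{*}}yw_{H^{*}}$.

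Concretely, the clean way to finish is: both $\ep\colon y\mapsto b^{-1}y^{*}b$ and $\Ad(w_{H^{*}})\colon y\mapsto w_{H^{*}}yw_{H^{*}}$ are length-preserving automorphisms of $(W_{H^{*}},H^{*})$, hence induce permutations $\sigma_{\ep},\sigma_{w}$ of the simple reflections $H^{*}$; two such automorphisms agree iff $\sigma_{\ep}=\sigma_{w}$. The permutation $\sigma_{w}$ is the classical ``opposition involution'' of the Coxeter system $(W_{H^{*}},H^{*})$, given by $s\mapsto w_{H^{*}}sw_{H^{*}}$. For $\sigma_{\ep}$, I would use $d\in I_{*}$ as above to show that $\ep$ sends a simple reflection $s\in H^{*}$ to the simple reflection $b^{-1}s^{*}b$, and that the condition $d=bw_{H}\in I_{*}$ forces this assignment to be exactly the opposition involution — this is where the hypothesis that $d_{\mu}$ is a twisted involution is used in an essential way, and it is the step I expect to require the most care, since it amounts to showing that the ``twist'' coming from $*$ combined with the conjugation by $b$ is precisely compensated by $w_{H^{*}}$. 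Once $\sigma_{\ep}=\sigma_{w}$ is established the lemma follows. I would also double-check the base case $\mu=0$ ($b=e$, $H^{*}=J^{*}$... here one needs $H=J$, so this degenerate case should be handled by noting $b$ of minimal length in $W_{J}W_{J}=W_{J}$ is $e$ and the statement reduces to $y^{*}=w_{J}yw_{J}$, which is exactly the definition \eqref{star} of $*$ on $W_{J}$), which reassuringly matches the definition of $*$ and is a good sanity check on the sign/length conventions.
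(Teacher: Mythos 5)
Your plan has a genuine gap, rooted in an incorrect factorization of $d$. You write $d=d_{\mu}=bw_{H}$, but this is false: with $b$ the minimal element of $W_{J}\mu W_{J}$ and $H=J\cap bJb^{-1}$, the longest element is $d=w_{J}w_{H}bw_{J}$ (equivalently $d=cbw_{H^{*}}c^{*-1}$ with $c=w_{J}w_{H}$, as in [L, \S1.2]), so $\ell(d)=2\ell(w_{J})-\ell(w_{H})+\ell(b)$, which is much larger than $\ell(b)+\ell(w_{H})$. Your claimed factorization therefore cannot hold, and the computations built on it collapse. There is also a second error inside those computations: $*$ is an automorphism of $W$, not an anti-automorphism, so $(bw_{H})^{*}=b^{*}w_{H^{*}}$ rather than $w_{H^{*}}^{-1}b^{*,-1}$; with this corrected, the relation you derive for $b^{*}$ is wrong, and the conclusion ``$b^{*}w_{H^{*}}=bw_{H}=d$'' would say $d^{*}=d$, which is not what $d\in I_{*}$ asserts. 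Finally, you explicitly flag the central step — that $d\in I_{*}$ forces $\sigma_{\ep}$ to be the opposition involution — as not carried out, so even granting the framework, the proof is incomplete.

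The framework itself (both $\ep$ and $\Ad(w_{H^{*}})$ are length-preserving automorphisms of $(W_{H^{*}},H^{*})$, so it suffices to match the induced permutations of $H^{*}$) is reasonable, but the paper takes a more direct route that avoids the permutation comparison entirely. It observes $\mu w_{J}=d=w_{J}w_{H}bw_{J}$, hence $\mu=w_{J}w_{H}b$; it then identifies $W_{H^{*}}$ with the stabilizer $W_{\mu}=\{w\in W_{J}:{}^{w}\mu=\mu\}$ by showing $W_{\mu}\subset b^{-1}W_{J}b\cap W_{J}=W_{H^{*}}$ and comparing orders via $|W_{J}\mu W_{J}|=|W_{J}|^{2}/|W_{H}|$. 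For $y\in W_{\mu}=W_{H^{*}}$, the identity $b^{-1}y^{*}b=w_{H^{*}}yw_{H^{*}}$ is equivalent (applying $*$ and using $b=w_{H}w_{J}\mu$) to $w_{J}\mu y\mu^{-1}w_{J}=y^{*}$, which via $\mu y=y\mu$ reduces to $w_{J}yw_{J}=y^{*}$, the definition of $*$ on $W_{J}$. Your base case check ($\mu=0$, $b=e$, $H^{*}=J$) is correct and does reduce to the definition of $*$, but the general case needs the correct factorization and the stabilizer identification.
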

\begin{proof}
We shall denote the inverse of $\mu$ by $\mu^{-1}$ instead of $-\mu$ as before. We have $\mu w_{J}=d=w_{J}w_Hbw_{J}$. Hence $\mu=w_{J}w_Hb$. Now $W_J\times W_J$ acts transitively on $W_{J}\mu W_{J}$ by left and right multiplication and the isotropy group of $\mu$ is isomorphic to $W_{\mu}:=\{w\in W_{J};\leftexp{w}\mu=\mu\}$. Hence $|W_{J}\mu W_{J}|=|W_J|^2/|W_{\mu}|$. By \cite[1.1]{L} we have also 
$|W_{J}\mu W_{J}|=|W_J|^2/|W_H|$ hence $|W_{\mu}|=|W_H|$. We show that $W_{\mu}\subset W_{H^*}$. We have $W_H=W_{J}\cap bW_{J}b^{-1}$; applying $*$ we deduce $W_{H^*}=W_{J}\cap b^{-1} W_{J}b$. Hence it is enough to show that $W_{\mu}\subset b^{-1} W_{J}b$. Since $\mu=w_{J}w_Hb$ we have $\mu^{-1}W_{J}\mu=b^{-1} w_Hw_{J} W_{J}w_{J}w_Hb=b^{-1} W_{J}b$. If $w\in W_{\mu}$ then $w\mu=\mu w$ hence $\mu w\mu^{-1}=w\in W_{J}$; thus $W_{\mu}\subset \mu^{-1}W_{J}\mu=b^{-1} W_{J}b$. We have shown that $W_{\mu}\subset W_{H^*}$. Since the last two groups have the same order we see that $W_{\mu}=W_{H^*}$. Hence to prove (a) it is enough to show that for any $y\in W_{\mu}$ we have 
$b^{-1} y^*b=w_{H^*}yw_{H^*}$ that is (after applying $*$) $byb^{-1}=w_Hy^*w_H$. Since $b=w_Hw_{J}\mu$, it is enough to show that for $y\in W_{\mu}$ we have $w_{J}\mu y \mu^{-1}w_{J}=y^*$, or, using $\mu y=y\mu$, that $w_{J}yw_{J}=y^*$. This follows from the definition of $*$ in Section \ref{ss:aff}. This proves the lemma.
\end{proof}

\begin{lemma}\label{5.5a} If $L\subsetneqq S$ and $\Ad(w_{L})$ is the conjugation by $w_{L}$ on $W_{L}$, then
\begin{equation*}
\bP_{L,\Ad(w_{L})}(q)=\bP_L(-q)\left(\frac{1+q}{1-q}\right)^{n_L},
\end{equation*}
where $n_L$ is the number of odd exponents of $W_L$.
\end{lemma}
\begin{proof}
Let $e_i (i\in X)$ be the exponents of $W_L$. We have $X=X'\sqcup X''$ where 
$X'=\{i\in X;e_i\text{ is odd}\}$, $X''=\{i\in X;e_i \text{ is even}\}$. It is well known that 
\begin{equation*}
\bP_L(q)=\prod_{i\in X}\frac{q^{e_i+1}-1}{q-1}.
\end{equation*}
It follows that
\begin{equation}\label{5.5b}
\bP_L(-q)=\prod_{i\in X'}\frac{q^{e_i+1}-1}{-q-1}\prod_{i\in X''}\frac{q^{e_i+1}+1}{q+1}
\end{equation}
We have
\begin{equation}\label{5.5c}
\bP_{L,\Ad(w_{L})}(q)=\prod_{i\in X'}\frac{q^{e_i+1}-1}{q-1}\prod_{i\in X''}\frac{q^{e_i+1}+1}{q+1}.
\end{equation}
Here, the left hand side evaluated at a prime power $q$ is the number of $\FF_q$-rational Borel subgroups of 
a semisimple algebraic group defined over $\FF_q$ which is twisted according to the opposition involution. This can be computed from the known formula for the number of rational points of such an algebraic groups given in 
\cite[\S11]{St}. Now the lemma follows from \eqref{5.5b} and \eqref{5.5c}.
\end{proof}

\subsection{}\label{ss:5.6}
Using Lemma \ref{5.4a} and \ref{5.5a} (applied to $L=H^{*}$) and the definition of $\zeta$ we see that the desired equality \eqref{5.3b} is equivalent to
\begin{equation*}
q^{\ell(b)}\left(\frac{q-1}{q+1}\right)^{\phi(b)}\left(\frac{1+q}{1-q}\right)^{n_J-n_{H^*}}=(-q)^{\ell(b)},
\end{equation*}
that is, to the equality
\begin{equation}\label{5.6a}
\phi(b)=n_J-n_{H^*}.
\end{equation}   
Here we use that $\phi(w)=\ell(w)(\mod2)$ for any $w\in I_*$, see \cite[4.5]{L}.

Define $\phi':\{z\in W_{H^*};\ep(z)=z^{-1}\}\to\NN$ in terms of $(W_{H^*},\ep)$ in the same way as $\phi$ was defined in terms of $W$ and $*$ in \cite[4.5]{L} (using the difference of the dimension of the $(-1)$-eigenspaces of $w\in W_{H^{*}}$ and $ww_{H^{*}}$ on the reflection representation of $W_{H^{*}}$). We show:
 
\begin{lemma}\label{l:5.6b} For any $z\in W_{H^*}$ such that $\ep(z)=z^{-1}$, we have $\phi(bz)=\phi'(z)+\phi(b)$.
\end{lemma}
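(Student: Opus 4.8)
The plan is to unwind the definition of $\phi$ in terms of $(-1)$-eigenspaces of involutions acting on $\L_\QQ$, and to show that both sides of the claimed identity measure the same quantity. Recall that for $w\in I_*$ with image $\barw\in\barW$, we have $\phi(w)=e(\barw*)-e(*)$, where $e(\tau)$ denotes the dimension of the $(-1)$-eigenspace of an involution $\tau$ acting on $\L_\QQ$. So the right-hand side $\phi'(z)+\phi(b)$ equals $\bigl(e'(\bar z\,\ep)-e'(\ep)\bigr)+\bigl(e(\bar b*)-e(*)\bigr)$, where $e'$ denotes dimensions of $(-1)$-eigenspaces of involutions acting on the reflection representation $V_{H^*}$ of $W_{H^*}$, while the left-hand side $\phi(bz)$ equals $e(\overline{bz}\,*)-e(*)$. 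Since $\bar b*$ and $\overline{bz}\,*$ both act as involutions on $\L_\QQ$ (using $(bz)^*=(bz)^{-1}$, which follows from $b\in I_*$ and $\ep(z)=z^{-1}$, unwound via Lemma \ref{5.4a}), and since $\bar z\in W_{H^*}\subset W_J=\barW$, the two extra terms $e(*)$ cancel and what must be shown reduces to
\begin{equation*}
e(\overline{bz}\,*)=e(\bar b*)+e'(\bar z\,\ep)-e'(\ep).
\end{equation*}

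The key geometric input is to relate the action of these involutions on $\L_\QQ$ to their action on the reflection representation of $W_{H^*}$. First I would decompose $\L_\QQ$ as a representation of $W_{H^*}$ (acting through $\barW=W_J$). Since $W_{H^*}$ is generated by the simple reflections in $H^*\subsetneq J$, the space $\L_\QQ$ splits $W_{H^*}$-equivariantly as $V_{H^*}\oplus (\L_\QQ)^{W_{H^*}}$, where $V_{H^*}$ is spanned by the coroots of $H^*$ and the complement is the fixed subspace. The involution $t\mapsto \bar b(t^*)$ normalizes $W_{H^*}$ (by the definition $H=J\cap bJb^{-1}$ and Lemma \ref{5.4a}, which identifies $\ep$ with $\Ad(w_{H^*})$), hence it preserves this decomposition: on $V_{H^*}$ it acts compatibly with $\ep$, and on the fixed part it acts by some involution independent of $z$. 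Introducing $z$ changes the involution from $t\mapsto\bar b(t^*)$ to $t\mapsto \overline{bz}\,(t^*)=\bar b\,\bar z(t^*)$; since $\bar z\in W_{H^*}$ acts trivially on $(\L_\QQ)^{W_{H^*}}$, the action on the fixed part is unchanged, so the entire effect of $z$ is confined to $V_{H^*}$, where it replaces the involution governing $e'(\ep)$ by the one governing $e'(\bar z\,\ep)$. Taking dimensions of $(-1)$-eigenspaces on each summand and adding gives precisely the displayed identity.

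The main obstacle I anticipate is making the bookkeeping of "which involution acts on which summand" completely rigorous — in particular verifying that the involution $t\mapsto\bar b(t^*)$ really does stabilize $V_{H^*}$ and acts there exactly as the involution used to define $\phi'$ with respect to $(W_{H^*},\ep)$, rather than some twist of it. This requires carefully combining the definition of $\ep$ as $y\mapsto b^{-1}y^*b$, the identification $\ep=\Ad(w_{H^*})$ from Lemma \ref{5.4a}, and the compatibility between the $*$-action on $\L$ and the $w_J$-twist built into $*$ (equation \eqref{star}). A secondary technical point is checking that $\bar z$ acting through $\barW$ on $\L_\QQ$ genuinely has $(\L_\QQ)^{W_{H^*}}$ in its fixed locus; this is immediate once the $W_{H^*}$-decomposition above is established, since $\bar z\in W_{H^*}$. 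Once the decomposition is in hand and the two involutions are matched on the $V_{H^*}$-summand, the proof is just additivity of $(-1)$-eigenspace dimensions over a direct sum.
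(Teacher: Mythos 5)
Your approach is genuinely different from the paper's: the paper proves the lemma by a short induction on $\ell(z)$, reducing $z$ to $sz\ep(s)$ for a suitable $s\in H^*$ and tracking how $\phi$ and $\phi'$ change under such moves, whereas you attempt a direct computation of $(-1)$-eigenspace dimensions via a decomposition of $\L_\QQ$. The strategy is reasonable in spirit, but your execution contains a concrete error that breaks the argument.

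The problem is the claim that the involution $t\mapsto\bar b(t^*)$ normalizes $W_{H^*}$ and hence preserves the decomposition $\L_\QQ=V_{H^*}\oplus(\L_\QQ)^{W_{H^*}}$. This is false. Writing $\bar b=w_Hw_J$ (which follows from $\mu=w_Jw_Hb$) and $t^*=-w_Jt$, the linear map in question is $\bar b\circ *=-w_H$, and conjugation by $-w_H$ carries $W_{H^*}$ to $w_HW_{H^*}w_H$, which equals $W_{H^*}$ only when $H=H^*$. Likewise $-w_H$ does not stabilize $V_{H^*}$ or $(\L_\QQ)^{W_{H^*}}$ in general (e.g.\ already in type $A_2$ with $H=\{s_1\}$, $H^*=\{s_2\}$ one has $-s_1\alpha_2^\vee\notin\QQ\alpha_2^\vee$ and $w_H\mu\neq\mu$). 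Relatedly, the ``extra factor'' produced by $z$ is not $\bar z$: unwinding $\overline{bz}(t^*)=\bar b\bar z(-w_Jt)$ gives $-w_H(w_J\bar zw_J)=-w_H\bar z^*$, and $\bar z^*\in W_H$, not $W_{H^*}$. So your statement ``$\bar z\in W_{H^*}$ acts trivially on $(\L_\QQ)^{W_{H^*}}$'' is not the relevant one; the factor that actually appears lies in $W_H$ and acts nontrivially on $(\L_\QQ)^{W_{H^*}}$.

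The argument can be repaired by decomposing with respect to $W_H$ instead: set $A=-w_H$ and $A_z=-w_H\bar z^*$; both preserve $\L_\QQ=V_H\oplus(\L_\QQ)^{W_H}$ and agree on the second summand, so $\phi(bz)-\phi(b)=e(A_z)-e(A)$ is the difference of $(-1)$-eigenspace dimensions on $V_H$ alone. One then needs two further steps you did not address: (i) the $*$-isomorphism $V_H\isom V_{H^*}\cong R'$ conjugates $-w_H\bar z^*$ to $-w_{H^*}\bar z$ and $-w_H$ to $-w_{H^*}$, and (ii) $e'(-w_{H^*}\bar z)=e'(-\bar z w_{H^*})$ because the two maps are conjugate by $\bar z$; combining with $\phi'(z)=e'(-\bar zw_{H^*})-e'(-w_{H^*})$ then gives the result. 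As written, however, your proof rests on a false normalization claim and so does not go through.
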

\begin{proof}
We argue by induction on $\ell(z)$. If $z=1$ the result is clear. Now assume that $z\neq1$. We can find $s\in H^*$ such that $\ell(sz)<\ell(z)$. Assume first that $sz\neq z\ep(s)$.  Then $\ell(sz\ep(s))=\ell(z)-2$ hence by the induction
hypothesis we have $\phi(bsz\ep(s))=\phi'(sz\ep(s))+\phi(b)$. By definition, $\phi'(sz\ep(s))=\phi'(z)$. We have
$bsz\ep(s)=bsb^{-1} bz\ep(s)=\ep(s)^*bz\ep(s)$ and hence, by definition, $\phi(bsz\ep(s))=\phi(\ep(s)^*bz\ep(s))=\phi(bz)$. Thus $\phi(bz)=\phi'(z)+\phi(b)$.
Next we assume 
that $sz=z\ep(s)$.  Then $\ell(sz\ep(s))=\ell(z)-1$ hence by the induction hypothesis we have $\phi(bsz\ep(s))=\phi'(sz\ep(s))+\phi(b)$. By definition, $\phi'(sz\ep(s))=\phi'(z)-1$ and $\phi(bsz\ep(s))=\phi(\ep(s)^*bz\ep(s))=\phi(bz)-1$. 
Thus $\phi(bz)=\phi'(z)+\phi(b)$. This completes the proof of the lemma.
\end{proof}

\subsection{Completion of the proof} From Lemma \ref{l:5.6b} we deduce 
\begin{equation}\label{5.6c}
\phi(bw_{H^*})=\phi'(w_{H^*})+\phi(b).
\end{equation}
We have $d=cbw_{H^*}c^{*-1}$ where $c=w_{J}w_H$ (see \cite[\S1.2]{L}). 
From the definition of $\phi$ we see that $\phi(d)=\phi(bw_{H^*})$ hence, using \eqref{5.6c}, we have
\begin{equation*}\label{5.6d}
\phi(d)=\phi'(w_{H^*})+\phi(b).
\end{equation*}
Hence \eqref{5.6a} is equivalent to
\begin{equation}\label{5.6e}
\phi(d)-\phi'(w_{H^*})=n_J-n_{H^*}.
\end{equation}

For any linear map $A:\L_\QQ\to\L_\QQ$ (where $\L_{\QQ}=\L\otimes_{\ZZ}\QQ$), recall $e(A)$ is the dimension of the $(-1)$-eigenspace of $A$. We claim that
\begin{equation}\label{5.7a}
\phi(d)=e(w_{J}).
\end{equation}
In fact, if $w\in I_*$ with image $\barw\in\barW$, we have $\phi(w)=e(\barw *)-e(*)$. Since the action of $*$ is given by $x\mapsto-w_{J}(x)$), we have 
$\phi(w)=e(-\barw w_{J})-e(-w_{J})$. If $w=d$ then $d=tw_{J}$ ($t$ is the dominant translation) hence $\barw=w_{J}\in\barW\cong W_{J}$ and $\phi(d)=e(-\id)-e(-w_{J})$, which is equal to $e(w_{J})$. This proves \eqref{5.7a}.

Now let $R'$ be the reflection representation of $W_{H^*}$. For any linear map $A:R'\to R'$ we denote by $e'(A)$ the dimension of the $(-1)$-eigenspace of $A$. We claim that
\begin{equation}\label{5.7b}
\phi'(w_{H^*})=e'(w_{H^*}).
\end{equation}
In fact, from the definition we have $\phi'(w_{H^*})=e'(w_{H^*}\ep)-e'(\ep)$. Note that both $w_{H^*}$ and $\ep$ act naturally on $R'$; the action of $\ep$ is given by $x\mapsto-w_{H^*}x$ by Lemma \ref{5.4a}. Thus we have $\phi'(w_{H^*})=e'(-\id)-e'(-w_{H^*})=e'(w_{H^*})$. This proves \eqref{5.7b}.

Using \eqref{5.7a} and \eqref{5.7b} we see that the desired equality \eqref{5.6e} is equivalent to
\begin{equation}\label{5.7c}
e(w_{J})-e'(w_{H^*})=n_J-n_{H^*}.
\end{equation}
Now for any finite Weyl group, the dimension of the $(-1)$-eigenspace of the longest element acting on the reflection representation is equal to the number of odd exponents of that Weyl group, as one easily verifies. It follows that $e(w_{J})=n_J$, $e'(w_{H^*})=n_{H^*}$. Thus \eqref{5.7c} is proved. This completes the proof of Theorem \ref{th:Z-q}.

\quash{
\subsection{}\label{ss:5.8}
Let $\dG$ be a simple adjoint group over $\CC$ of type dual to that of $(W,S)$. We assume that we are
given a maximal torus $\dT$ of $\dG$, a Borel subgroup $\dB$ of $\dG$ containing $\dT$, an identification of
$\L\cong\xch(\dT)$ and an identification of $W_{J}$ with the Weyl group of $\dG$ with respect to
$\dT$ in such a way that the natural action of the last Weyl group on $\xch(\dT)$ becomes
the natural $W_{J}$ action on $\L$. Now any $\l\in\L^+$ can be viewed as the highest weight of a finite
dimensional irreducible $\dG$-module $V_\l$. 
}

\subsection{Signature of a hermitian form}\label{ss:sign}
Let $\dG$ be the Langlands dual of $G$ as before, with dual Cartan and Borel $\dT\subset\dB$. We identify the Weyl group of $\dG$ with $W_{J}$. Let $\l\in\L^+$, viewed as a dominant weight of $\dG$, and let $V_{\l}$ be the corresponding irreducible representation of $\dG$ with highest weight $\l$. In \cite{L97} a hermitian form $h_{\l}$ on $V_\l$ is constructed in terms of a semisimple element 
$s\in\dT$ with $s^{2}=1$. Here we shall take $s=(-1)^{\rho}$. The hermitian form $h_{\l}$ is invariant under
a real form of $\dG$ which can be shown to be quasi-split (for our choice of $s$) and admits a compact Cartan subgroup. Moreover, by \cite[2.9]{L97}, the signature of $h_{\l}$ is given by
\begin{equation}\label{5.9a}
\text{Signature}(h_{\l})=(-1)^{\jiao{\rho,\l}}\tr((-1)^{\rho},V_\l).
\end{equation}

Recall the following results from \cite{L83}. First, it is shown in \cite[6.1]{L83} that the multiplicity of the weight $\mu$ in $V_\l$ is equal to $P_{d_\mu,d_\l}(1)$. Second, we have the formula (see \cite[(8.10)]{L83} and its proof)
\begin{equation}\label{5.8c}
\tZ_{d_\l}(q)=q^{\jiao{\rho,\l}}\sum_{\mu\in\L^+;d_\mu\leq d_\l}P_{d_\mu,d_\l}(1)
\sum_{\mu\in W_{J}\mu}q^{\jiao{\rho,\mu}}.
\end{equation}
Setting $q=1$ we obtain that $\tZ_{d_{\l}}(1)=\dim V_{\l}$. Setting $q=-1$ in \eqref{5.8c}, we obtain
\begin{equation}\label{5.8d}
\tZ_{d_\l}(-1)=(-1)^{\jiao{\rho,\l}}\tr((-1)^{\rho},V_\l).
\end{equation}
We may also obtain \eqref{5.8d} from Lemma \ref{l:g}(3) and Lemma \ref{l:tZ}.
Combining \eqref{5.8d} with Theorem \ref{th:Z-q} and \eqref{5.9a}, we obtain
\begin{equation}\label{5.9d}
\text{Signature}(h_{\l})=\tZ^\s_{d_\l}(1).
\end{equation}
Thus, while $\tZ_{d_\l}(q)$ is a $q$-analogue of the dimension of $V_\l$, $\tZ^\s_{d_\l}(q)=\tZ_{d_\l}(-q)$ is the $q$-analogue of the signature of the hermitian form $h_{\l}$ on $V_\l$.

\begin{remark}
We expect that the hermitian form $h_{\l}$ on $V_\l$ is the complexification of the sum of the polarization Hodge structures $\IH^{2p}(\Gr_{\leq\l})$ (which only has $(p,p)$-classes). By the Riemann-Hodge bilinear relation, this pairing is positive (resp. negative) definite on $\IH^{2p}(\Gr_{\leq\l})$ when $p$ is even (resp. odd). Therefore the signature on the total intersection cohomology $\IHb(\Gr_{\leq\l})$ (which is also the signature of the Poincar\'e duality pairing) is also  calculated by $\tZ_{d_{\l}}(-1)=\tZ^{\s}_{d_{\l}}(1)$. 

\end{remark}

\section{Generalization}\label{s:gen}

\subsection{More involutions in affine Weyl groups}
In Section \ref{ss:aff}, we fixed a hyperspecial vertex $s_{0}\in S$ in the Dynkin diagram of $(W,S)$. Let $A=\Aut(W,S)$. Then $A$ has a subgroup
\begin{equation*}
A_{\Lambda}:=\{a\in\Aut(W,S)|\textup{ there exists }w\in W_{J}\textup { such that }a(\lambda)=\leftexp{w}{\lambda}\textup{ for all }\lambda\in\Lambda\}.
\end{equation*}
One may identify $A_{\Lambda}$ with the affine automorphisms fixing the standard alcove corresponding to $S$. It is easy to see that $A_{\Lambda}$ is normal in $A$. Let $\barA:=A/A_{\Lambda}$. The stabilizer of $s_{0}$ under $A$ is $A_{J}=\Aut(W_{J}, J)$, which projects isomorphically to $\barA$.

We recall the extended affine Weyl group is the semi-direct product $\tilW=W\rtimes A_{\Lambda}$, and it fits into an exact sequence
\begin{equation*}
1\to\tilLam\to\tilW\to\barW\to1
\end{equation*}
where $\tilLam$ is a lattice containing $\Lambda$ such that the projection $\tilLam\hookrightarrow\tilW\to A_{\Lambda}$ induces an isomorphism $\tilLam/\Lambda\cong A_{\Lambda}$.

\begin{lemma} Recall we have an involution $*\in A_{J}$ defined in \eqref{star}.
\begin{enumerate}
\item Every element in the coset $A_{\Lambda}*=*A_{\Lambda}\subset A$ is an involution.
\item For any hyperspecial vertex $s_{1}\in S$, there is a unique $a\in A_{\Lambda}*$ which sends $s_{0}$ to $s_{1}$.
\end{enumerate}
\end{lemma}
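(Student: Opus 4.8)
The plan is to reduce both assertions to the structure of $A_{\Lambda}$. Recall that $A=A_{\Lambda}\rtimes A_{J}$ with $*\in A_{J}$, that $A_{\Lambda}$ is normal in $A$, and that $A_{\Lambda}$ is canonically identified with $\tilLam/\Lambda$ via the exact sequence recalled above. Part (1) follows once I show that conjugation by $*$ acts on $A_{\Lambda}$ by inversion; part (2) follows once I identify the hyperspecial vertices of $S$ with the freely permuted $A_{\Lambda}$-orbit of $s_{0}$.

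\emph{Part (1).} Since $A_{\Lambda}\trianglelefteq A$ we have $A_{\Lambda}*=*A_{\Lambda}$, so it suffices to prove that each $\eta*$ with $\eta\in A_{\Lambda}$ satisfies $(\eta*)^{2}=1$, i.e.\ $*\eta*^{-1}=\eta^{-1}$ (using $*^{-1}=*$). The involution $*\in A_{J}$ acts on $\tilW=W\rtimes A_{\Lambda}$ extending its action on $W$, and under $A_{\Lambda}\cong\tilLam/\Lambda$ this conjugation action is the one induced by $\mu\mapsto-\leftexp{w_{J}}{\mu}$ on $\tilLam$ (the evident extension to the coweight lattice $\tilLam$ of the formula $\lambda^{*}=-\leftexp{w_{J}}{\lambda}$ from \eqref{star}). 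The key point is that every element of $\barW$, in particular $w_{J}$, acts trivially on $\tilLam/\Lambda$: for $w\in\barW$ and $\mu\in\tilLam$ one has $\mu-\leftexp{w}{\mu}\in\Lambda$, since $\Lambda$ is the coroot lattice and $\mu-\leftexp{w}{\mu}$ lies in the coroot lattice for any $\mu$ in the coweight lattice. Hence $\mu\mapsto-\leftexp{w_{J}}{\mu}$ descends to $\bar\mu\mapsto-\bar\mu$ on $\tilLam/\Lambda$; that is, $*$ conjugates $A_{\Lambda}$ by inversion (in particular $A_{\Lambda}$ is abelian). Therefore $(\eta*)^{2}=\eta\,(*\eta*^{-1})=\eta\,\eta^{-1}=1$.

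\emph{Part (2).} Here I invoke the classical description of the vertices of Dynkin label $1$ in an untwisted connected affine Dynkin diagram: they are precisely the vertices in the $A_{\Lambda}$-orbit of $s_{0}$, and $A_{\Lambda}$ permutes them simply transitively. (Concretely, this is the standard identification of $A_{\Lambda}\cong\tilLam/\Lambda$ with the set of minuscule fundamental coweights, the nonzero ones corresponding exactly to the affine simple roots of label $1$; see e.g.\ Bourbaki, \emph{Groupes et alg\`ebres de Lie}, Ch.~VI.) Granting this, given a hyperspecial vertex $s_{1}$, let $\eta\in A_{\Lambda}$ be the unique element with $\eta(s_{0})=s_{1}$. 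Since $*(s_{0})=s_{0}$, the element $a:=\eta*\in A_{\Lambda}*$ sends $s_{0}$ to $\eta(s_{0})=s_{1}$. For uniqueness, if $\eta'*\in A_{\Lambda}*$ also sends $s_{0}$ to $s_{1}$, then $\eta'(s_{0})=s_{1}=\eta(s_{0})$, so $\eta'=\eta$ by freeness of the $A_{\Lambda}$-action on the hyperspecial vertices; hence $a$ is unique.

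The only non-formal input is the fact used in Part (2), that the label-$1$ vertices form a single free $A_{\Lambda}$-orbit; this is where I expect the substantive work to lie, and it is supplied by the standard classification of affine diagrams and their alcove-preserving automorphisms. A minor point in Part (1) worth spelling out is that the conjugation action of $*\in A\subset\Aut(W)$ on $A_{\Lambda}$ genuinely coincides with the action computed in $\tilW$: this holds because $*\in A_{J}$ and $A_{J}$ acts on $\tilW=W\rtimes A_{\Lambda}$ compatibly with the splitting, diagram automorphisms of the finite diagram acting on $\tilLam$ and hence on $A_{\Lambda}$.
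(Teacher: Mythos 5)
Your proof is correct and follows essentially the same route as the paper: you reduce part (1) to the observation that $*$ acts on $A_{\Lambda}=\tilLam/\Lambda$ by inversion because the reflection action of $w_{J}$ is trivial modulo $\Lambda$, and part (2) to the simple transitivity of $A_{\Lambda}$ on hyperspecial vertices. The only difference is that you spell out why $\mu-\leftexp{w_{J}}{\mu}\in\Lambda$, a point the paper leaves implicit.
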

\begin{proof}
(1) The group $A_{J}$ acts on $A_{\L}$ by conjugation. This action can be seen explicitly as follows: $W_{J}\rtimes A_{J}$ acts on $\tilLam$ by the reflection action stabilizing $\L$. The action of $A_{J}$ on the quotient $A_{\L}=\tilLam/\L$ is then induced from this reflection action. In particular, the action of $*\in A_{J}$ on $\tilLam$ is via $\l\mapsto-\leftexp{w_{J}}\l$, which is congruent to $-\l$ modulo $\L$. Therefore $*$ acts on $A_{\L}$ by inversion, hence every element $a*\in A_{\L}*$ satisfies $(a*)^{2}=a(*a*)=aa^{-1}=1$. 

(2) It is well-known that $A_{\Lambda}$ permutes the hyperspecial vertices simply transitively. Then for any $a\in A_{\Lambda}$, we have $(a*)(s_{0})=a(s_{0})$ which exhaust all hyperspecial vertices exactly once as $a$ runs over $A_{\Lambda}$. 
\end{proof}

Let $s_{1}$ be another hyperspecial vertex in $S$. Let $\diamond\in A_{\Lambda}*$ be the unique involution taking $s_{0}$ to $s_{1}$, hence taking $J$ to $J^{\diamond}=S-\{s_{1}\}$. Let $I_{\diamond}=\{w\in W|w^{\diamond}=w^{-1}\}$ be the $\diamond$-twisted involutions in $W$. To avoid complicated subscripts, we denote $W_{J^{\diamond}}$ by $W_{J}^{\diamond}$ instead. 

The following theorem generalizes Theorem \ref{th:main}.
\begin{theorem}\label{th:gen}
\begin{enumerate}
\item []
\item Each double coset $W_{J}\backslash W/W_{J}^{\diamond}$ in $W$ is stable under the anti-involution $w\mapsto (w^{\diamond})^{-1}$. In particular, the longest element in each $(W_{J},W_{J}^{\diamond})$-double coset belongs to $I_{\diamond}$.
\item For longest representatives $d_{1}$ and $d_{2}$ of $(W_{J},W_{J}^{\diamond})$-double cosets in $W$, we have
\begin{equation*}
P_{d_1,d_2}^{\sigma,\diamond}(q)=P_{d_1,d_2}(-q).
\end{equation*}
Here the polynomials $P^{\sigma,\diamond}_{y,w}(q)$ ($y,w\in I_{\diamond}$) are the ones defined in \cite{L} in terms of $(W,S,\diamond)$.
\end{enumerate}
\end{theorem}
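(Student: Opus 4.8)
The plan is to reduce the statement for $\diamond$ to the already-established statement for $*$ (Theorem \ref{th:main}) by conjugating everything by the element of $\tilW$ that carries the alcove to a position where $\diamond$ becomes $*$. Concretely, write $\diamond = \tau_0 \ast$ with $\tau_0 \in A_\Lambda$, and pick $\omega \in \tilLam$ whose image in $A_\Lambda \cong \tilLam/\Lambda$ realizes $\tau_0$; then conjugation by $\omega$ (an automorphism of $(W,S)$ coming from an element of $\tilW$, hence preserving lengths and the Bruhat order) should intertwine the triple $(W,S,\diamond)$ with the triple $(W,S,\ast)$, up to a shift. More precisely, one expects $w \mapsto \omega^{-1} w \omega$ (or $w \mapsto \omega w \omega^{-1}$, whichever sends $s_0 \mapsto s_1$ correctly) to send $I_\ast$ to $I_\diamond$, to carry $d_\lambda$ to the longest representative of the corresponding $(W_J, W_J^\diamond)$-double coset, and — because the polynomials $P^{\sigma}_{y,w}$, $P^{\sigma,\diamond}_{y,w}$, and $P_{y,w}$ are all defined purely from the combinatorics of $(W,S)$ together with the relevant (anti-)involution, via \eqref{defineP} or its algebraic counterpart — to identify $P^{\sigma,\diamond}_{d_1,d_2}(q)$ with $P^{\sigma}_{d_\mu,d_\lambda}(q)$ and $P_{d_1,d_2}(q)$ with $P_{d_\mu,d_\lambda}(q)$. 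Granting all of this, part (2) follows immediately from Theorem \ref{th:main}.

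For part (1), the cleanest route is geometric, paralleling the discussion in Section \ref{ss:Gr}: the condition that each $(W_J, W_J^\diamond)$-double coset is stable under $w \mapsto (w^\diamond)^{-1}$ is exactly the statement that the anti-involution $\tau_\diamond(g) = (g^\diamond)^{-1}$ of $G((t))$, where now $\diamond$ acts via the outer automorphism attached to $\diamond \in A_J \cong \barA$, sends each $G[[t]]$-double coset $G[[t]] t^\lambda G[[t]]$ to itself. Since $\diamond$ and $\ast$ differ by an element of $A_\Lambda$, which acts on $\Lambda = \xcoch(T)$ by the identity (it permutes alcoves, not coweights), the induced map on $\Lambda$ is the same as for $\ast$, namely $\lambda \mapsto -\lambda^\ast = \leftexp{w_J}{\lambda}$; hence $G[[t]] t^{-\lambda^\diamond} G[[t]] = G[[t]] t^\lambda G[[t]]$ just as before. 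Alternatively, one argues purely combinatorially: the length of the longest element of $W_J \mu W_J^\diamond$ and the structure of the double coset are governed by $H = J \cap b J^\diamond b^{-1}$ exactly as in the proof of Lemma \ref{5.4a}, and the same manipulation with $w_J$, $w_H$, $w_{H^\diamond}$ shows $(d^\diamond)^{-1} = d$.

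The main obstacle I anticipate is \emph{bookkeeping}, not conceptual difficulty: one must check carefully that conjugation by $\omega$ really does preserve the length function $\ell$ (it does, since elements of $A_\Lambda$ are automorphisms of the Coxeter system, not just of the group) and, more delicately, that the \emph{auxiliary data} entering the definition of the $P^{\sigma}$-polynomials — the function $\phi$, the compatible system of isomorphisms $\Phi_w \colon \tau^\ast \bS_w \isom \bS_w$ normalized to be the identity on the open stratum, and the resulting traces in \eqref{defineP} — all transport correctly under the conjugation, so that no spurious sign or normalization discrepancy appears. A second, lesser point is making sure $\diamond$ is realized by an \emph{inner} twist plus an element of $A_\Lambda$ in a way compatible with the chosen pinning of $G$, so that the geometric argument of Section \ref{s:proof} applies verbatim to $\tau_\diamond$; here one uses that $\diamond \in A_J \cong \barA \hookrightarrow \Out(G)$ acts by a pinned automorphism, exactly as $\ast$ did in Section \ref{ss:Gr}. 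Once these compatibilities are in place, both parts of the theorem are formal consequences of Theorem \ref{th:main} and the double-coset analysis already carried out for $\ast$.
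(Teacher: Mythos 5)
There is a genuine gap: the proposed reduction of part (2) to Theorem \ref{th:main} by conjugation does not exist. Conjugation by a translation $\omega\in\tilLam$ is \emph{not} an automorphism of the Coxeter system $(W,S)$ — it moves the fundamental alcove, hence does not preserve $S$, the length function, or the Bruhat order — so it cannot transport the data defining the polynomials. The only candidates that do preserve $(W,S)$ are elements of $\Aut(W,S)$ (in particular $A_{\Lambda}$), and no such element intertwines $\diamond$ with $*$ in general: writing $\diamond=a*$ with $a\in A_{\Lambda}$ and using $*a*=a^{-1}$, one gets $a^{-k}\diamond a^{k}=a^{1-2k}*$, which never equals $*$ when $a$ has even order (already in type $\tilde A_{1}$, where $*=\id$ and $\diamond$ is the swap of the two vertices, the two involutions are visibly not conjugate; also $*$ fixes $s_{0}$ while $\diamond$ moves it to $s_{1}$). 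Consequently the triples $(W,S,*)$ and $(W,S,\diamond)$ are genuinely non-isomorphic, the sets $I_{\diamond}$ and the $(W_{J},W_{J}^{\diamond})$-double cosets are not relabelings of $I_{*}$ and the $(W_{J},W_{J})$-double cosets, and $P^{\sigma,\diamond}_{d_{1},d_{2}}$ cannot be identified with any $P^{\sigma}_{d_{\mu},d_{\lambda}}$; Theorem \ref{th:gen}(2) is strictly more general than Theorem \ref{th:main}, not a formal consequence of it. This is why the paper does not reduce but rather reruns the geometric argument on the component $\Grad_{a}$ of the affine Grassmannian of the adjoint group, identifying $G((t))/\bP$ with $\Grad_{a}$ and $\calSad$ with $\Rep(\dGsc)$; the genuinely new point there, absent from your proposal, is that the monoidal isomorphism $\Theta$ now corresponds to a choice of lift $\tg\in\dTsc$ of $(-1)^{\rho}$, which is not canonical, and one must check (via the weight bookkeeping in \eqref{nu} and \eqref{tg}) that the resulting trace on $\calH^{2j}_{\mu}\bC_{\lambda}$ is still $(-1)^{j}$ independently of the choice, using that $\mu-\nu$ and $\lambda-\mu$ lie in $\Lambda$.

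Two further inaccuracies: your assertion that $A_{\Lambda}$ acts on $\Lambda$ by the identity is false — by its very definition $a\in A_{\Lambda}$ acts on $\Lambda$ through some $w\in W_{J}$ (in $\tilde A_{1}$ the nontrivial element acts by $-1$) — although the weaker fact you need, that $\diamond$ acts on $\Lambda$ by $\lambda\mapsto-\leftexp{w'}{\lambda}$ for some $w'\in W_{J}$, does hold. And your geometric argument for part (1) is aimed at the wrong cosets: stability of $G[[t]]t^{\lambda}G[[t]]$ under $\tau_{\diamond}$ concerns $(W_{J},W_{J})$-double cosets, whereas part (1) is about $(W_{J},W_{J}^{\diamond})$-double cosets, i.e.\ about $(G[[t]],\bP)$-cosets for the other hyperspecial parahoric $\bP$. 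The paper's proof of (1) is instead a short combinatorial embedding: right multiplication by $a$ identifies $W_{J}\backslash W/W_{J}^{\diamond}$ with the subset $W_{J}\backslash Wa/W_{J}$ of $W_{J}\backslash\tilW/W_{J}$, intertwining $w\mapsto(w^{\diamond})^{-1}$ with the extension of $w\mapsto(w^{*})^{-1}$ to $\tilW$, whose $(W_{J},W_{J})$-double cosets are stable because each contains a dominant element of $\tilLam$.
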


\subsection{Sketch of proof} We only indicate how to modify the proof of Theorem \ref{th:main} to give the proof of this theorem. 

The anti-involution $w\mapsto (w^{*})^{-1}$ extends to an anti-involution on $\tilW$ by the same formula \eqref{star} (except that $\lambda$ now is any element in $\tilLam$). Again each double coset $W_{J}\backslash\tilW/W_{J}$ is stable under this anti-involution. Write $\diamond=a*$ for $a\in A_{\Lambda}$, then $W_{J}^{\diamond}=a(W_{J})$. Multiplication by $a$ on the right gives a bijection
\begin{equation*}
W_{J}\backslash W/W_{J}^{\diamond}\leftrightarrow W_{J}\backslash W\cdot a/W_{J}\subset W_{J}\backslash\tilW/W_{J}.
\end{equation*}
This shows part (1) of Theorem \ref{th:gen}.

In the situation of Section \ref{ss:flag}, $G$ is a simply-connected group. Let $\Gad$ be the adjoint form of $G$, with maximal torus $\Tad=T/Z(G)$. Then we have a natural isomorphism $\tilLam\cong\xcoch(\Tad)$. The connected components of the affine Grassmannian $\Grad$ for $\Gad$ are indexed by $\tilLam/\Lambda$. The $\Gad[[t]]$-orbits on $\Grad$ are indexed by $\tilLam/W_{J}$, and the natural projection $\tilLam/W_{J}\to\tilLam/\Lambda$ indicates which orbit belongs to which connected component. Identifying $A_{\Lambda}$ with $\tilLam/\Lambda$, we denote the corresponding component of $\Grad$ by $\Grad_{a}$ ($a\in A_{\L}$ such that $\diamond=a*$). We may similarly define the Satake category $\calSad$ for $\Gad$ with simple objects $\bC_{\lambda}[\jiao{2\rho,\l}]$, $\lambda\in\tilLam^{+}$ (dominant coweights of $\Gad$). Via the fiber functor $\Hb$, $\calSad$ is equivalent to $\Rep(\dGsc)$, where $\dGsc$ is the simply-connected form of $\dG$. The same anti-involution $\tau^{*}$ defines a functor $(\calSad,\odot)\to(\calSad,\odot^{\sigma})$, and there is an isomorphism $\Psi_{\lambda}:\tau^{*}\bC_{\lambda}\isom\bC_{\lambda}$ normalized to be the identity on $\Grad_{\lambda}$, which induces an involution $\calH^{i}_{\mu}\Psi_{\lambda}$ on the stalks $\calH^{i}_{\mu}\bC_{\lambda}$ for $\mu\leq\lambda\in\tilLam^{+}$. Note that in the partial ordering of $\tilLam$, two elements are comparable only if they are congruent modulo $\L$.

Let $\dota\in N_{\Gad((t))}(\Tad((t)))$ be a lifting of $a\in A_{\Lambda}<\tilW$, then $\dota\Gad[[t]]\dota^{-1}$ is a hyperspecial parahoric subgroup of $\Gad((t))$ corresponding to the vertex $s_{1}=\diamond(s_{0})$. Let $\bP\subset G((t))$ be the hyperspecial parahoric subgroup (containing $\bI$) corresponding to $s_{1}$. Right multiplication by $\dota$ induces an isomorphism
\begin{equation}\label{Pa}
G((t))/\bP\isom\Gad((t))/\dota\Gad[[t]]\dota^{-1}\isom\Grad_{a}
\end{equation} 
which is equivariant under the left actions by $G[[t]]$. The double coset $G[[t]]\backslash G((t))/\bP$ is in bijection with $W_{J}\backslash W/W_{J}^{\diamond}$. As in \eqref{Grdef}, the coefficients of the polynomials $P^{\sigma,\diamond}_{d_{1},d_{2}}(q)$ are expressible as the traces of an involution on the stalks of the intersection cohomology complexes on $G[[t]]$-orbits of $G((t))/\bP$. Under the isomorphism \eqref{Pa}, we have the following formula generalizing \eqref{Grdef}:
\begin{equation*}
P^{\sigma,\diamond}_{d_{1},d_{2}}(q)=\sum_{j\in\ZZ}\tr(\calH^{2j}_{\mu}\Psi_{\lambda},\calH^{2j}_{\mu}\bC_{\lambda})q^{j}.
\end{equation*}
Here $\mu\leq\lambda\in\tilLam^{+}$ have image equal to $a$ in $\tilLam/\Lambda$, and $d_{1}$ (resp. $d_{2}$) is the longest element in the double coset $W_{J}\mu a^{-1}W_{J}^{\diamond}$ (resp. $W_{J}\lambda a^{-1}W_{J}^{\diamond}$).

So in order to prove Theorem \ref{th:gen}(2), it suffices to show that $\calH^{2j}_{\mu}\Psi_{\lambda}$ acts on $\calH^{2j}_{\mu}\bC_{\lambda}$ via multiplication by $(-1)^{j}$ for any $\mu\leq\lambda\in\tilLam^{+}$. The argument in Section \ref{s:proof} works up to Lemma \ref{l:coho}. The pair $(\tau^{*},\gamma)$ again determines the element $g=(-1)^{\rho}\in\dT<\Aut(\dGsc)$. However, a monoidal isomorphism $\Theta:\tau^{*}\isom\id^{\sigma}_{\calSad}$ is the same as the choice of an element $\tg\in\dTsc$ lifting $(-1)^{\rho}$: the effect of $\Theta$ on $V\in\Rep(\dGsc)\cong\calSad$ is the action of $\tg^{-1}$. Lemma \ref{l:g}(2) should say that the effect of $\Theta$ (or $\tg^{-1}$) on $\bC_{\lambda}[\jiao{2\rho,\l}]$ is $\tg^{-\leftexp{w_{J}}{\lambda}}\tau^{*}_{K}$. In the rest of the argument, we use \eqref{graction}. The piece $\cohoc{2\jiao{2\rho,\mu}}{\Omega_{\mu}}\otimes\calH^{2j}_{\mu}\bC_{\lambda}$ appears in degree $2\jiao{2\rho,\mu}+2j-\jiao{2\rho,\l}$ in $\IHb(\Omega_{\leq\l})[\jiao{2\rho,\l}]\cong V_{\l}$, hence it appears as a subquotient of $\oplus_{\nu}V_{\lambda}(\nu)$, where $\nu\in\tilLam$ has the same image as $\lambda$ and $\mu$ in $\tilLam/\Lambda$ and
\begin{equation}\label{nu}
\jiao{2\rho,\nu}=2\jiao{2\rho,\mu}+2j-\jiao{2\rho,\l}, \textup{ or }j=\jiao{\rho,\nu+\l-2\mu}.
\end{equation}
We write $\cohoc{2\jiao{2\rho,\mu}}{\Omega_{\mu}}\otimes\calH^{2j}_{\mu}\bC_{\lambda}=\oplus_{\nu}(\cohoc{2\jiao{2\rho,\mu}}{\Omega_{\mu}}\otimes\calH^{j}_{\mu}\bC_{\lambda})_{\nu}$ according to the weight decomposition. Therefore $\tg^{-1}$ or $\tg^{-\leftexp{w_{J}}{\lambda}}\tau^{*}_K$ acts on $(\cohoc{2\jiao{2\rho,\mu}}{\Omega_{\mu}}\otimes\calH^{2j}_{\mu}\bC_{\lambda})_{\nu}$ by $\tg^{-\nu}$. Specializing to $\lambda=\mu=\nu$, $\tg^{-\leftexp{w_{J}}{\mu}}\tau^{*}_{K}$ acts on $\cohoc{2\jiao{2\rho,\mu}}{\Omega_{\mu}}=\IH^{2\jiao{2\rho,\mu}}(\Omega_{\leq\mu})$ by $\tg^{-\mu}$. Therefore, by \eqref{graction}, the action of $\calH^{j}_{\mu}\Psi_{\lambda}$ on $\calH^{j}_{\mu}\bC_{\lambda}$ is given by
\begin{equation}\label{tg}
\tg^{-\nu+\leftexp{w_{J}}{\lambda}}\cdot(\tg^{-\mu+\leftexp{w_{J}}{\mu}})^{-1}=\tg^{-\nu+\mu+\leftexp{w_{J}}{(\lambda-\mu)}}.
\end{equation} 
Since $-\nu+\mu\in\Lambda$, we have $\tg^{-\nu+\mu}=g^{-\nu+\mu}=(-1)^{\jiao{\rho,-\nu+\mu}}$. Since $\lambda-\mu\in\Lambda$, we also have $\tg^{\leftexp{w_{J}}{(\lambda-\mu)}}=g^{\leftexp{w_{J}}{(\lambda-\mu)}}=(-1)^{\jiao{\rho,\leftexp{w_{J}}{(\lambda-\mu)}}}=(-1)^{\jiao{-\rho,\lambda-\mu}}$. Taking these two facts together we conclude that the expression \eqref{tg} is equal to
\begin{equation*}
(-1)^{\jiao{\rho,-\nu+\mu}}(-1)^{\jiao{-\rho,\lambda-\mu}}=(-1)^{\jiao{\rho,-\nu-\lambda+2\mu}},
\end{equation*}
which is equal to $(-1)^{j}$ by \eqref{nu}. This finishes the proof of Theorem \ref{th:gen}.

\begin{remark}
The results of Section \ref{s:g2} and \ref{s:a2} can also be extended to the setup in Section \ref{s:gen}. Thus $\dG$ can be replaced by the corresponding simply connected group whose irreducible finite dimensional representations carry a natural hermitian form as in \cite{L97} with signature expressible in terms analogous to \eqref{5.9d}. We omit the details.
\end{remark}



\begin{thebibliography}{99}



\bibitem[BL94]{BL}
Bernstein, B.; Lunts, V.
{\em Equivariant sheaves and functors}. Lecture Notes in Mathematics, 1578. 
Springer-Verlag, Berlin, 1994.


\bibitem[G01]{Ga}
Gaitsgory, D. 
Construction of central elements in the affine Hecke algebra via nearby cycles. 
Invent. Math. 144 (2001), no. 2, 253--280.

\bibitem[G95]{G}
Ginzburg, V. 
Perverse sheaves on a Loop group and LanglandsÕ duality.
arXiv:math/9511007.

\bibitem[KL79]{KL}
Kazhdan, D.; Lusztig, G.
Representations of Coxeter groups and Hecke algebras, 
Invent. Math. 53 (1979), 165--184.

\bibitem[KL80]{KL2}
Kazhdan, D.; Lusztig, G.
Schubert varieties and Poincar\'e duality. 
In {\em Geometry of the Laplace operator} , Proc. Sympos. Pure Math., XXXVI, 
Amer. Math. Soc., Providence, R.I., 1980.

\bibitem[L83]{L83}
Lusztig, G. 
Singularities, character formulas, and a $q$-analog of weight multiplicities. 
{\em Analysis and topology on singular spaces, II, III} (Luminy, 1981), 208--229, 
Ast\'erisque, 101-102, Soc. Math. France, Paris, 1983.

\bibitem[L97]{L97}
Lusztig, G.
Cohomology of classifying spaces and hermitian representations
Represent.Th. 1 (1997), 31--36.

\bibitem[LV11]{LV1}
Lusztig, G.; Vogan, D. 
Hecke algebras and involutions in Weyl groups. 
arxiv:1109.4606.


\bibitem[L11]{L}
Lusztig, G.
A bar operator for involutions in a Coxeter group.
arXiv:1112.0969.


\bibitem[MV07]{MV}
Mirkovi\'c, I.; Vilonen, K. 
Geometric Langlands duality and representations of algebraic groups over commutative rings.  
Ann. of Math. (2) 166 (2007),  no. 1, 95--143.

\bibitem[PS86]{PS}
Pressley, A.; Segal, G.
{\em Loop groups}. 
Oxford Mathematical Monographs. 
Oxford Science Publications. The Clarendon Press, Oxford University Press, New York, 1986.

\bibitem[S67]{St}
Steinberg, R.
{\em Lectures on Chevalley groups},
Yale University, 1967. Available from {\tt http://www.math.ucla.edu/\~{}rst/}.

\end{thebibliography}
\end{document}